\documentclass[11pt]{article}
\RequirePackage{amssymb,amsmath,amscd,amsthm,enumerate,geometry,pifont}
\usepackage{authblk}
\usepackage[T1]{fontenc}
\usepackage[utf8]{inputenc}

\theoremstyle{plain}% Theorem-like structures provided by amsthm.sty
%% - operator names

\DeclareMathOperator{\SL}{SL}
\DeclareMathOperator{\Sym}{Sym}
\DeclareMathOperator{\Alt}{Alt}
\DeclareMathOperator{\Syl}{Syl}
\DeclareMathOperator{\ps}{ps}
\DeclareMathOperator{\nps}{nps}

\renewcommand{\(}{\langle\mskip2mu\relax}
\renewcommand{\)}{\mskip2mu \rangle}

\let\normal=\trianglelefteq
\newcommand\narrow{\advance\leftskip by\parindent}

%% - theorems etc.

\newtheorem{thm}{Theorem}[section]

\newtheorem{cor}[thm]{Corollary}
\newtheorem{lemma}[thm]{Lemma}
\theoremstyle{definition}
\newtheorem{defn}[thm]{Definition}
\newtheorem{rem}[thm]{Remark}
\numberwithin{equation}{section}

\newcommand{\Case}[2]{\smallskip\medbreak
	\noindent\textbf{Case #1.\enspace}\emph{#2}\par
	\smallskip\noindent\ignorespaces}

\newcommand{\Claim}[1]{\medbreak
	\noindent\textbf{Claim.\enspace}\emph{#1}\par
	\smallskip\noindent\ignorespaces}

\newcommand\keywords[1]{\textbf{Keywords}: #1}

\newcommand\classification[1]{\textbf{2020 Mathematics Subject Classification}: #1}

\begin{document}
	
	\title{Groups with at most 13 nonpower subgroups}
	
	\author[a]{Jiwei Zheng}
	\author[a]{Wei Zhou\footnote{Contact zh\_great@swu.edu.cn}}
	\author[b]{D. E. Taylor}
	\affil[a]{School of Mathematics and Statistics, Southwest University, Chongqing 400715, P.R. China}
	\affil[b]{School of Mathematics and Statistics, The University of Sydney, New South Wales 2006, Australia}

	\maketitle
	
   	\noindent \textbf{Abstract.} For a group $G$ and $m\ge 1$, $G^m$ denotes the subgroup generated by
	the elements $g^m$ where $g$ runs through~$G$. The subgroups not of the
	form $G^m$ are called nonpower subgroups. We extend the classification 
	of groups with few nonpower subgroups from groups with at most 9 nonpower 
	subgroups to groups with at most 13 nonpower subgroups. \\
			
	\noindent\keywords{counting subgroups, power subgroups, nonpower subgroups.}\\
	
	\noindent\classification{20D25, 20D60.}

	\section{Introduction}
	\setcounter{secnumdepth}{1}
	For a group $G$ and $m\ge 1$ the \emph{power} subgroup $G^m$ is the subgroup
	generated by the elements $g^m$ where $g$ runs through~$G$.  A subgroup that
	is not a power subgroup is a \emph{nonpower} subgroup. Let $\ps(G)$ and $\nps(G)$
	denote the number of power and nonpower subgroups of~$G$.  It is 
	immediate that every power subgroup is a characteristic subgroup of $G$.  
	
	In 1956 Sz\'asz \cite{szasz:1956} proved that $G$ is cyclic if and only if 
	$\nps(G) = 0$. In 2006, Zhou, Shi and Duan \cite{zhou-shi-duan:2006}
	showed that a noncyclic group is finite if and only if it has a finite
	number of nonpower subgroups and it was observed that a finite noncyclic group
	must have at least three nonpower subgroups. Then Anabanti \emph{et al.} 
	\cite{anabanti-etal:2022,anabanti-hart:2022} classified the groups with
	three or four nonpower subgroups.  We summarise these results in the following 
	theorem. (See section \ref{sec:notation} for definitions of the groups.)

	\begin{thm}\label{thm:upto4}\relax\leavevmode
		For $0\le k\le 4$ a group has exactly $k$ nonpower subgroups if and only if 
		up to isomorphism it is one of the following.
	\end{thm}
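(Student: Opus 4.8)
The plan is to dispatch the five values of $k$ to the results already summarised in the introduction, handling the vacuous cases by hand. For $k=0$ the assertion is exactly Szász's theorem \cite{szasz:1956}: $\nps(G)=0$ if and only if $G$ is cyclic, so the list for $k=0$ consists precisely of the cyclic groups and nothing needs to be verified beyond quoting that result.

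Next I expect the cases $k=1$ and $k=2$ to be empty, and this should follow from a clean trichotomy on $G$. If $G$ is cyclic then $\nps(G)=0$; if $G$ is noncyclic and infinite then $\nps(G)$ is infinite by \cite{zhou-shi-duan:2006}; and if $G$ is noncyclic and finite then $\nps(G)\ge 3$ by the observation recorded in the introduction. These three possibilities are exhaustive and none of them yields the value $1$ or $2$, so no group has exactly one or two nonpower subgroups and the corresponding lists are empty (the ``if'' direction being vacuous).

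For $k=3$ and $k=4$ the completeness of the list—the ``only if'' direction—is precisely the content of \cite{anabanti-etal:2022} and \cite{anabanti-hart:2022} respectively, so I would import those classifications wholesale. The remaining task is the forward direction: for each group $G$ named in the list I would confirm directly that $\nps(G)=k$. Concretely, I would enumerate the subgroup lattice of $G$, compute each power subgroup $G^m=\langle g^m : g\in G\rangle$—recalling that every such subgroup is characteristic and therefore drawn from a restricted pool of candidates—and count the subgroups not arising in this way.

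The main obstacle is essentially bookkeeping rather than structure: for each named group and each relevant exponent $m$ one must correctly identify $G^m$ and then check that the distinct power subgroups together with the distinct nonpower subgroups partition the lattice exactly as claimed. Since the substantive classification for $k=3,4$ is quoted, no new structural argument is required; the care lies in matching the group descriptions of \cite{anabanti-etal:2022,anabanti-hart:2022} to the notation fixed in section \ref{sec:notation} and in verifying the power-subgroup computations case by case.
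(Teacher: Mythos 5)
Your proposal is correct and matches the paper's treatment: Theorem \ref{thm:upto4} is stated there purely as a summary of the cited literature (Sz\'asz for $k=0$, Zhou--Shi--Duan for the emptiness of $k=1,2$, and Anabanti \emph{et al.} for $k=3,4$), with no independent proof given. Your handling of the vacuous cases via the cyclic/infinite-noncyclic/finite-noncyclic trichotomy and your plan to verify $\nps(G)=k$ for the listed groups by direct computation of power subgroups are exactly what is implicit in the paper.
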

	\newcounter{Lcount1}
	\begin{list}{$k=\arabic{Lcount1}$}
		%    ʹ�ü�����
		{\usecounter{Lcount1}
			\setcounter{Lcount1}{-1}
			%    ���ҶԳ�
			\setlength{\rightmargin}{\leftmargin}}
		%    ��ʼ
		\item  \quad A cyclic group.
		\item  \quad No examples.
		\item  \quad No examples.
		\item  \quad $C_2\times C_2$, $Q_8$ or\/ $G_{n,3}$ for $n\ge 1$.
		\item  \quad $C_3\times C_3$.
	\end{list}

	This theorem, combined with the following theorem from our previous paper 
	\cite{zheng-etal:2023}, classifies groups with at most nine nonpower subgroups.
	
	\begin{thm}\label{thm:upto9}\relax\leavevmode
		For $5\le k\le 9$ a group has exactly $k$ nonpower subgroups if and only if 
		up to isomorphism it is one of the following.
	\end{thm}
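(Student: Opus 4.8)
The plan is to build directly on the reductions already available. Since $k\ge 5>0$, Sz\'asz's criterion forces $G$ to be noncyclic, and the Zhou--Shi--Duan dichotomy then makes $G$ finite; so throughout I assume $G$ is a finite noncyclic group with $5\le\nps(G)\le 9$. The one structural fact that organises everything is the remark quoted above: every power subgroup is characteristic, so \emph{every non-characteristic subgroup is a nonpower subgroup}. Hence $G$ has at most nine non-characteristic subgroups, and writing $s(G)$ for the total number of subgroups we have $\nps(G)=s(G)-\ps(G)$. The strategy is to pin down $\ps(G)$ from the descending power series $G\ge G^{p}\ge G^{p^2}\ge\cdots$ at each relevant prime, and to use the scarcity of non-characteristic subgroups to force $G$ into a short list of shapes.

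First I would dispose of the nilpotent case by a multiplicativity lemma. If $G=P_1\times\cdots\times P_r$ is the decomposition into Sylow subgroups, then because the $P_i$ have coprime orders the subgroup lattice is the product of the lattices of the factors, and a Chinese-remainder argument shows $\ps(G)=\prod_i\ps(P_i)$; consequently $\nps(G)=\prod_i s(P_i)-\prod_i\ps(P_i)$. A short estimate, using that a noncyclic $p$-group has at least three nonpower subgroups, shows that at most one factor can be noncyclic, so $G=P\times C$ with $P$ a noncyclic $p$-group and $C$ cyclic of coprime order, and then $\nps(G)=\nps(P)\cdot d$ where $d$ is the number of divisors of $|C|$. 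Solving $5\le\nps(P)\cdot d\le 9$ reduces the nilpotent case to (a) the $p$-group classification with $\nps(P)\in\{5,\dots,9\}$, and (b) the already-known cases $\nps(P)\in\{3,4\}$ of Theorem~\ref{thm:upto4} crossed with $C=C_q$ or $C=C_{q^2}$. This last step is exactly where the infinite families appear: groups such as $G_{n,3}\times C_q$ and $G_{n,3}\times C_{q^2}$ inherit a bounded $\nps$ while having unbounded order, so a bound on $\nps$ will \emph{not} bound $|G|$.

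The core of the work is therefore the classification of noncyclic $p$-groups $P$ with $\nps(P)\le 9$. For abelian $P$ the subgroup counts are explicit functions of the type, which quickly confines $P$ to rank $\le 2$ with small exponent and yields a finite list. For nonabelian $P$ I would filter candidates through the count of non-characteristic subgroups together with the length of the power series; here the infinite families emerge precisely as the presentations in which lengthening a cyclic factor adds terms to the characteristic power chain without creating new non-characteristic subgroups. Separately, the non-nilpotent groups are attacked through a non-normal Sylow subgroup: its conjugates are non-characteristic and hence nonpower, so their number is at most nine, which bounds both the relevant indices and the fixed-point structure of the action and leaves only small semidirect products $C_{p^a}\rtimes C_{q^b}$ and a few sporadic groups (for example $\Alt_4$) to be checked against a closed formula for $\nps$ in terms of the parameters.

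The main obstacle is not verifying any individual group but guaranteeing completeness. The delicate points are establishing tight enough subgroup-count bounds to cut the nonabelian $p$-group case down to finitely many shapes plus explicitly described families, and, in the non-nilpotent case, controlling how $\nps$ depends on the coprimality and the fixed points of the action so that no sporadic example slips through. I would expect to confirm the small-order sporadic candidates, and the base cases of the families, with a computer-algebra check to be certain that nothing of the relevant orders has been overlooked.
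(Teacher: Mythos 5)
A preliminary remark: the paper does not prove Theorem~\ref{thm:upto9} at all --- it is imported verbatim from the authors' earlier paper \cite{zheng-etal:2023} --- so the only available comparison is with the method that paper and the present one use for Theorem~\ref{thm:main}. Measured against that, your overall architecture is the right one and is essentially theirs: reduce via Lemma~\ref{lemma:prod} to a noncyclic Sylow factor times a cyclic coprime part in the nilpotent case, and count conjugates of a non-normal Sylow subgroup in the non-nilpotent case. Your multiplicativity computations ($\ps(A\times B)=\ps(A)\ps(B)$, $\nps(P\times C)=\nps(P)\,d(|C|)$, at most one noncyclic Sylow factor) are all correct.

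The genuine gap is the $p$-group case, which is the technical heart of the theorem and which you leave as ``filter candidates through the count of non-characteristic subgroups together with the length of the power series.'' That is not a terminating procedure: it does not by itself bound the rank, the class, or the order of a nonabelian candidate. The argument that actually closes this case (in both papers) is concrete and you would need each piece of it: (i) every proper nontrivial subgroup of $G/\Phi(G)$ is nonpower and $\nps(C_2\times C_2\times C_2)=14$, so $G$ is $2$-generated; (ii) Lemma~\ref{lemma:quo} plus T\u{a}rn\u{a}uceanu's formula (Lemma~\ref{lemma:list}\,\eqref{i:Cp}) pins down the finitely many possible types of $G/G'$; (iii) for $G'\ne 1$ one passes to $G/R$ with $|G'/R|=p$ and invokes Blackburn's theorem (Lemma~\ref{lemma:R}), the classification of $p$-groups with a cyclic maximal subgroup, and the non-existence result Lemma~\ref{lemma:qd} to force $G$ itself onto the list. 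Without (i)--(iii) or an equivalent, completeness in the nonabelian $p$-group case is simply not established. Two smaller but real defects: the families $G_{n,3}\times C_q$ and $G_{n,3}\times C_{q^2}$ are \emph{not} nilpotent ($G_{n,3}$ is a nontrivial semidirect product $C_3\rtimes C_{2^n}$), so they cannot be produced by your nilpotent branch; they must be recovered in the Sylow analysis by showing the group splits as (non-nilpotent core)\,$\times$\,(cyclic), as in the proof of Lemma~\ref{lemma:A}, a step your sketch omits. And your proposed safety net --- a computer check of ``the relevant orders'' --- cannot certify completeness here, because the target list contains families of unbounded order ($G_{n,5}$, $F_{n,7}$, $C_2\times C_2\times C_{p^2}$ for every odd prime $p$, \dots), so no finite computation replaces the structural argument.
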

	
	\newcounter{Lcount2}
	
	%   ��ǩ������ʾΪ��������
	\begin{list}{$k=\arabic{Lcount2}$}
		%    ʹ�ü�����
		{\usecounter{Lcount2}
			\setcounter{Lcount2}{4}
			%    ���ҶԳ�
			\setlength{\rightmargin}{\leftmargin}}
		%    ��ʼ
		\item\quad $C_2\times C_4$ or\/ $G_{n,5}$ for $n\ge 1$.
		\item\quad $C_5\times C_5$, $C_2\times C_2\times C_p$, $Q_8\times C_p$,
		where $p > 2$ is a prime or\/ $G_{n,3}\times C_q$ for $n\ge 1$, where $q > 3$
		is a prime.
		\item\quad $D_8$, $\Alt(4)$, $C_2\times C_8$, $Q_{16}$, $M_{4,2}$, 
		$C_3\times C_9$, $M_{3,3}$, $G_{n,7}$ or\/ $F_{n,7}$ for $n\ge 1$.
		\item\quad $C_7\times C_7$ or\/ $C_3\times C_3\times C_p$, where $p\ne 3$ is a prime.
		\item\quad $C_2\times C_{16}$, $M_{5,2}$, $C_2\times C_2\times C_{p^2}$,
		$Q_8\times C_{p^2}$, where $p > 2$ is a prime or\/ $G_{n,3}\times C_{q^2}$, 
		where $q > 3$ is a prime.
	\end{list}
	
	In the present paper we extend the classification to groups with at most 13 
	nonpower subgroups.
	
	\begin{thm}\label{thm:main}\relax\leavevmode
		For $10\le k\le 13$ a group has exactly $k$ nonpower subgroups if and only if 
		up to isomorphism it is one of the following.
	\end{thm}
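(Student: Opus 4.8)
The plan is to follow the strategy of the previous paper \cite{zheng-etal:2023}, pushing every estimate far enough to cover $k$ up to $13$. Write $s(G)$ for the total number of subgroups, so that $\nps(G)=s(G)-\ps(G)$. By \cite{zhou-shi-duan:2006} a noncyclic group with finitely many nonpower subgroups is finite, so throughout I may assume $G$ is a finite noncyclic group with $10\le\nps(G)\le 13$. The backbone of the argument is the observation that power subgroups are well behaved under coprime direct products: if $\gcd(|A|,|B|)=1$ then every subgroup of $A\times B$ splits as $A_0\times B_0$ and $(A\times B)^m=A^m\times B^m$, whence $s(A\times B)=s(A)s(B)$ and $\ps(A\times B)=\ps(A)\ps(B)$. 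Applying this to the Sylow decomposition of a nilpotent group $G=P_1\times\dots\times P_r$ gives the master formula
\begin{equation}\label{eq:prod}
\nps(G)=\prod_{i=1}^r s(P_i)-\prod_{i=1}^r \ps(P_i),
\end{equation}
which reduces the nilpotent case to arithmetic once the pairs $(s(P),\ps(P))$ are tabulated for the relevant $p$-groups.

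First I would treat the nilpotent groups. A cyclic $p$-group $C_{p^a}$ contributes the pair $(a+1,a+1)$, while a noncyclic $p$-group $P$ has $s(P)>\ps(P)$ and hence $\nps(P)\ge 1$; since the two products in \eqref{eq:prod} differ by at most $13$, only a tightly bounded configuration of Sylow factors can occur, namely one noncyclic ``engine'' together with a small number of cyclic factors. This forces me to classify the noncyclic $p$-groups with $\nps\le 13$, extending the lists already visible in Theorems \ref{thm:upto4}--\ref{thm:upto9}: the rank-$2$ families $C_2\times C_{2^n}$ and $M_{n,2}$, the mixed examples $C_2\times C_2\times C_{p^j}$ and $Q_8\times C_{p^j}$, and the homocyclic examples such as $C_{11}\times C_{11}$ are then read off by solving \eqref{eq:prod} for each $k\in\{10,11,12,13\}$. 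This step is essentially a finite, if laborious, search governed entirely by \eqref{eq:prod}.

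The genuinely hard part is the non-nilpotent case, where no analogue of \eqref{eq:prod} is available because $G^m$ need not respect any direct decomposition. Here I would first bound the arithmetic of $G$: a large number of Sylow subgroups, a large rank, or a nonsolvable (hence large) chief factor each manufactures many subgroups and so many nonpower subgroups, so $G$ must be solvable with a very restricted Sylow structure, typically possessing a normal (cyclic or elementary abelian) Sylow subgroup complemented by a cyclic group. I would then realise $G$ as such a semidirect product and compute $\ps(G)$ directly from the action — using that $G^m$ is characteristic and that, for $m$ coprime to the relevant prime, $G^m$ recovers the kernel or the whole group — while counting the remaining subgroups via the orbits of the complement on the subgroups of the kernel. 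Carrying this out should isolate the parametrised families $G_{n,p}$ and $F_{n,p}$, whose nonpower counts are controlled by $p$ (so that $p=11,13$ enter at $k=11,13$), together with finitely many sporadic groups such as the relatives of $\Alt(4)$ and the small $\SL$-type groups.

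Finally I would assemble the two streams into a single list and verify that it is exhaustive and non-redundant: each candidate is checked to have the asserted value of $k$ by direct computation of $s(G)$ and $\ps(G)$, and the structural bounds of the previous paragraphs guarantee that no further groups arise. I expect the main obstacle to be the completeness of the non-nilpotent enumeration: controlling the power subgroups $G^m$ of a semidirect product finely enough to pin down $\ps(G)$, and proving that the constraint $\nps(G)\le 13$ admits no non-nilpotent families beyond $G_{n,p}$, $F_{n,p}$ and the sporadic list, is where the delicate case analysis will concentrate.
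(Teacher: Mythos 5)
Your overall architecture --- splitting into nilpotent and non-nilpotent cases, using the coprime product formula to reduce the nilpotent case to a single noncyclic Sylow ``engine'' times a cyclic group, and analysing non-nilpotent groups as semidirect products over a normal Sylow subgroup --- does match the paper. But there is a genuine gap in your nilpotent case. You assert that classifying the noncyclic $p$-groups $P$ with $\nps(P)\le 13$ is ``essentially a finite, if laborious, search governed entirely by'' your product formula. It is not: that formula only tells you how to combine Sylow factors once the pairs $(s(P),\ps(P))$ are known, and the class of noncyclic $p$-groups is infinite, so no amount of arithmetic with it identifies which of them have at most $13$ nonpower subgroups. This is where the bulk of the paper's final argument lives: one first bounds the minimal number of generators by $2$ (every proper nontrivial subgroup of $G/\Phi(G)$ is nonpower and $\nps(C_2\times C_2\times C_2)=14$), then runs through the possible types of $G/G'$ using the formula for $\nps(C_{p^{n_1}}\times C_{p^{n_2}})$, and for each type invokes nontrivial structure theory: the classification of $2$-groups with $G/G'\simeq C_2\times C_2$ (dihedral, semidihedral, quaternion), the classification of $p$-groups with a cyclic maximal subgroup (producing $M_{n,p}$, which must then be filtered through Lemma~\ref{lemma:qd} on quasidihedral quotients), Blackburn's theory of two-generator $p$-groups via $R=\Phi(G')G_3$ (Lemma~\ref{lemma:R}, which is what produces $B^2_{2,2}$ and rules out $B^1_{n,p}$), and the Hamiltonian-group argument for the cases where all subgroups are forced to be normal. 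None of this follows from the product formula, and without it your list of $p$-group engines is unsupported.

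Your non-nilpotent sketch is directionally right but omits the device that actually drives the paper's argument: power subgroups are characteristic, hence normal, so the $|G:N_G(P)|\ge p+1$ conjugates of a non-normal Sylow subgroup $P$ --- and, separately, of $N_G(P)$ when $P\ne N_G(P)\ne G$ --- are already nonpower subgroups. This immediately forces $p\in\{2,3,5,7,11\}$, pins down the number of Sylow subgroups, and splits the analysis into the two cases $P\ne N_G(P)\ne G$ and $P=N_G(P)\ne G$ that the paper treats in Lemmas~\ref{lemma:A} and~\ref{lemma:B}. The specific eliminations are the real content here: ruling out $p=5$ with six Sylow subgroups via an index-$2$ subgroup, ruling out $p=11$ with twelve via Burnside's normal $p$-complement theorem, and, for each surviving configuration, the dichotomy on the kernel $K$ of the action on $\Syl_p(G)$ (whether $K=1$ or $K\ne 1$, and whether $K$ is absorbed into a cyclic subgroup) that separates, say, $D_{12}$ from $C_3\rtimes Q_8$ and $A_2,A_3$ from $\Alt(4)$. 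You correctly predict that this is where the difficulty concentrates, but the proposal leaves all of these steps open, so as it stands it is a plan rather than a proof.
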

	\newcounter{Lcount}
	
	%   ��ǩ������ʾΪ��������
	\begin{list}{$k=\arabic{Lcount}$}
		%    ʹ�ü�����
		{\usecounter{Lcount}
			\setcounter{Lcount}{9}
			%    ���ҶԳ�
			\setlength{\rightmargin}{\leftmargin}}
		%    ��ʼ
		\item \quad
		$C_3\times C_{27}$, $C_2\times C_4\times C_p$,
		$M_{4,3}$,
		$\Sym(3)\times C_3$,
		$A_2 = (C_2\times C_2)\rtimes C_9$,
		$G^{(2)}_{2,n;\,5,1}$ for $n\ge 2$ or
		$G_{n,5}\times C_q$ for $n\ge 1$, 
		where $p\ne 2$ and $q\ne 2,5$ are primes.
		% --- k = 11 ---
		\item \quad
		$C_2\times C_{32}$,
		$C_5\times C_{25}$,
		$S_{16}$,
		$M_{6,2}$,
		$M_{3,5}$,
		$\SL(2,3)$,
		$G_{n,11}$ or
		$G^{(3)}_{5,n;\,11,1}$ for $n\ge 1$.
		% --- k = 12 ---
		\item \quad
		$C_4\times C_4$,
		$C_{11}\times C_{11}$,
		$Q_8\times C_{qr}$,
		$C_2\times C_2\times C_{qr}$,
		$C_3\times C_3\times C_{s^2}$,
		$C_5\times C_5\times C_p$,
		$B^2_{2,2}$,
		$G_{n,9}$ or
		$G_{n,3}\times C_{qr}$ for $n\ge 1$, where $p,q,r$ and $s$ are primes such 
		that $p\ne 5$,
		$3 < q < r$ and $s\ne 3$.
		% --- k = 13 ---
		\item \quad
		$C_2\times C_{64}$,
		$C_3\times C_{81}$,
		$M_{7,2}$,
		$M_{5,3}$,
		$\Sym(3)\times C_2 = D_{12}$,
		$C_3\rtimes Q_8$,
		$A_3 = (C_2\times C_2)\rtimes C_{27}$,
		$G_{n,13}$ or
		$F_{n,13}$ for $n\ge 1$.
		% \item \quad
		% 	$C_2\times C_{64}$,
		% 	$C_3\times C_{81}$,
		% 	$M_{7,2}$,
		% 	$M_{5,3}$,
		% 	% $\Sym(3)\times C_2 = D_{12}$,
		% 	$C_3\rtimes Q_8$,
		% 	$A_3 = (C_2\times C_2)\rtimes C_{27}$,
		% 	$G_{n,13}$ or
		% 	$F_{n,13}$ for $n\ge 1$.
		% \item \quad 
		%            $C_2\times C_{64}$,
		% 	$C_3\times C_{81}$,
		% 	$M_{7,2}$,
		% 	$M_{5,3}$,
		% 	% $\Sym(3)\times C_2 = D_{12}$,
		% 	$C_3\rtimes Q_8$,
		% 	$A_3 = (C_2\times C_2)\rtimes C_{27}$,
		% 	$G_{n,13}$ or
		% 	$F_{n,13}$ for $n\ge 1$.
	\end{list}

	% ====================
	\section{Notation and definitions}\label{sec:notation}
	All groups considered in this paper are finite.
	Let $\Phi(G)$ denote the Frattini subgroup of $G$. For subgroups $H$ and 
	$K$ of $G$, let $[H,K]$ be the subgroup generated by the commutators 
	$[x,y] = x^{-1}y^{-1}xy$ with $x\in H$ and $y\in K$. 
	
	%For a prime $p$,
	%the largest normal subgroup of $G$ whose order is not divisible by $p$ is
	%denoted by $O_{p'}(G)$.
	
	Let $G$ be a $p$-group. For all $i\ge 1$, $\Omega_{i}(G)$ denotes the subgroup 
	$\( x\in G\mid x^{p^{i}}=1 \)$. (In general the exponent of $\Omega_{i}(G)$ may
	be greater than $p^i$.)
	
	We use Gorenstein \cite[Ch.\,5]{gorenstein:1968} as a reference for standard 
	results about $p$-groups. Statements of most of the key lemmas from 
	\cite{gorenstein:1968} used in the proof of the main theorem can be 
	found in our previous paper \cite{zheng-etal:2023}. Alternatively, see
	Huppert \cite[Ch. III]{huppert:1967}.
	
	Let $C_n$ denote the cyclic group of order $n$ and let $\Alt(n)$ and $\Sym(n)$ 
	denote the alternating and symmetric groups of a set of size $n$. Let
	$\SL(2,3)$ denote the group of $2\times 2$ matrices of determinant 1 over the
	field of 3 elements.
	
	We use the notation $G = N\rtimes K$ to mean that $G$ is a semidirect product
	of $N$ by $K$.  The action of $K$ on $N$ will be determined by the context.
	A group $G$ is \emph{metacyclic} if it has a normal cyclic subgroup $N$ such
	that $G/N$ is cyclic.
	
	Almost all the groups that occur in Theorems \ref{thm:upto4}, \ref{thm:upto9} and
	\ref{thm:main} are metacyclic or the direct product of a metacyclic group and 
	a cyclic group. Many of these groups can be described as a semidirect product 
	$C_{q^m}\rtimes C_{p^n}$ of cyclic groups of prime power orders. Metacyclic groups
	have been classified by Hempel \cite{hempel:2000}. The special case that we need
	is given by the following definition.
	
	\begin{defn}\label{defn:Gpnqm}
		For primes $p$ and $q$, positive integers $m$, $n$ and an integer $r$ 
		such that $r^{p^n}\equiv 1 \pmod{q^m}$, let $G^{(r)}_{p,n;\,q,m}$ denote the 
		group with presentation
		\[
		\( a, b \mid a^{p^n} = 1,\ b^{q^m} = 1,\ a^{-1}ba = b^r \), 
		\]
		If $p^k$ is the order of $r\pmod{q^m}$, the automorphism of $\(b\)$ 
		induced by conjugation by $a$ has order $p^k$; therefore, if $p\ne q$, then
		$p^k$ divides $q - 1$ and if the group is nonabelian, then $p$ divides $q-1$.
	\end{defn}
	
	%For primes $p$ and $q$ and positive integers $m$, $n$ and $r$ let 
	%$G^{(r)}_{p,n;\,q,m}$ be the group with presentation
	%$\( a, b \mid a^{p^n} = 1,\ b^{q^m} = 1,\ a^{-1}ba = b^r \)$, where there is
	%an integer $k\leq n$ such that $r^{p^k}\equiv 1\pmod{q^m}$. If $p\neq q$, we 
	%require $p<q$ and $p^k \mid q-1$.
	
	\begin{rem}
		For certain values of the parameters the groups $G^{(r)}_{p,n;q,m}$ have
		standard names (see \cite{gorenstein:1968}). Where possible we prefer to use
		these names.
		\begin{enumerate}[\rm (i)]
			\item $G^{(1)}_{p,n;\,q,m}$ is the direct product $C_{p^n}\times C_{q^m}$.
			\item$G^{(-1)}_{2,1;\,q,m}$ is the \emph{dihedral group} $D_{2q^m}$ of order $2q^m$.
			\item For $n\ge 4$, $G^{(-1+2^{n-2})}_{2,1;\,2,n-1}$ is the \emph{semidihedral} 
			group $S_{2^n}$ of order $2^n$.
			\item For $n\ge 4$ when $p = 2$ and $n\ge 3$ when $p$ is an odd prime, 
			$G^{(1+p^{n-2})}_{p,1;\,p,n-1}$ is the \emph{quasidihedral} group $M_{n,p}$ 
			of order $p^n$.
			($M_{3,p}$ is the \emph{extraspecial} group of order $p^3$ and 
			exponent~$p^2$.)
		\end{enumerate}
	\end{rem}
	
	\begin{rem}
		Because of the frequency with which the groups occur in the theorems and for
		compatibility with \cite{zheng-etal:2023} we use an abbreviated notation for 
		the following special cases. For $n\ge 1$ and a prime $p$,
		\begin{enumerate}[\rm (i)]
			\item $G_{n,p^m}$ denotes $G^{(-1)}_{2,n;\,p,m}$\,;
			\item $F_{n,p}$ denotes $G^{(r)}_{3,n;\,p,1}$ where the order of $r\pmod p$ is 3
			and hence $p\equiv 1\pmod 3$;
			\item $B^2_{n,p}$ denotes $G^{(p+1)}_{p,n;\,p,2}$ and we have $B^2_{1,2} = D_8$, 
			$B^2_{2,2}$ is the semidirect product $C_4\rtimes C_4$ and for $p$ 
			odd, $B^2_{1,p}= M_{3,p}$.
		\end{enumerate}
	\end{rem}
	
	For completeness we include definitions of two other families of groups 
	with standard names and two families of groups (without standard names), 
	which occur in the proof of the main theorem.
	
	\begin{defn}\label{defn:groups}\leavevmode
		\begin{enumerate}[\rm(i)]
			\item For $n\ge 3$, $\(a,b\mid a^{2^{n-1}} = b^2 = z,\ z^2 = 1,\ b^{-1}ab = a^{-1}\)$
			is a presentation for the \emph{generalized quaternion} group $Q_{2^n}$ of order $2^n$.
			\item
			For an odd prime $p$, $\(x,y,z \mid x^p = y^p = z^p = 1,\ [x,z] = [y,z] = 1,\ [x,y] = z\)$
			is a presentation for the \emph{extraspecial} group $M(p)$ of order $p^3$ and 
			exponent~$p$.
			\item
			For $n\ge 1$, $\(a,b,c \mid a^{3^n} = b^2 = 1,\ bc = cb,\ b^a = c,\ c^a = bc \)$
			is a presentation for the group $A_n = (C_2\times C_2)\rtimes C_{3^n}$ of order 
			$2^2 3^n$. For example, $A_1\simeq\Alt(4)$.
			\item 
			For $n\ge 1$ and a prime $p$,
			\[
			\(a,b,c\mid [a,b] = c,\ a^p = b^{p^n} = c^p = 1,\ [a,c] = [b,c] = 1 \)
			\]
			is a presentation for the group $B^1_{n,p}$ of order $p^{n+2}$. Except for 
			$B^1_{1,2} = D_8$, it is non-metacyclic (see \cite[Lemma 2.5]{blackburn:1958}).
			The quotient mod $\(c\)$ is $C_p\times C_{p^n}$ and for $p$ odd, $B^1_{1,p}= M(p)$.

		\end{enumerate}
	\end{defn}

	% ====================
	\section{Preliminaries}\label{sec:prelim}
	First we prove a general result about cyclic subgroups of prime power order.
	
	\begin{thm}\label{thm:power}
		For all primes $p$ and finite groups $G$, if $m$ divides the exponent $e$ 
		of\/ $G$ and $G^m$ is cyclic of order $p^k$, then $m = e/p^k$. In particular, for 
		all $k$, $G$ has at most one cyclic power subgroup of order $p^k$.
	\end{thm}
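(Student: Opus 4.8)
The plan is to prove $e=mp^k$ by establishing the two divisibilities $e\mid mp^k$ and $mp^k\mid e$ separately, and then to deduce the final ``at most one'' assertion from a reduction to the case $m\mid e$. Throughout I would write $H=G^m$, a characteristic cyclic subgroup of order $p^k$, and use the elementary inclusion $G^{mn}\le (G^m)^n$, which holds in any group since $g^{mn}=(g^m)^n$ for each generator $g^m$ of $G^m$. Taking $n=p^k$ and using that $H$ is cyclic of order $p^k$ (so $(G^m)^{p^k}=1$) gives $G^{mp^k}=1$, whence $e\mid mp^k$. Since we are given $m\mid e$, writing $e=mt$ shows $t\mid p^k$, so $t$ is a power of $p$ with $0\le \log_p t\le k$; it remains to force $t=p^k$.

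The crux is the reverse divisibility, and here I would exploit that $H=G^m$ is \emph{abelian}. Because $H$ is cyclic of order $p^k$ it has a unique maximal subgroup, so any generating set --- in particular the set $\{g^m : g\in G\}$ of $m$-th powers --- must contain an element lying outside that maximal subgroup, i.e.\ an element of full order $p^k$. Fixing $g$ with $\operatorname{ord}(g^m)=p^k$ and setting $d=\operatorname{ord}(g)$, we have $d/\gcd(d,m)=p^k$. Comparing $\ell$-adic valuations prime by prime, this equation forces $v_\ell(d)\le v_\ell(m)$ for every prime $\ell\ne p$ and $v_p(d)=v_p(m)+k$; a short computation then yields $\operatorname{lcm}(d,m)=mp^k$. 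Since $d\mid e$ and $m\mid e$, we get $mp^k=\operatorname{lcm}(d,m)\mid e$, and combining with $e\mid mp^k$ gives $e=mp^k$, that is, $m=e/p^k$.

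For the final statement I would first record the reduction $G^m=G^{\gcd(m,e)}$: writing $\gcd(m,e)=am+be$ and using $g^e=1$ gives $g^{\gcd(m,e)}=(g^m)^a\in G^m$, while $\gcd(m,e)\mid m$ gives the reverse inclusion. Hence every cyclic power subgroup of order $p^k$ equals $G^{m'}$ for some $m'=\gcd(m,e)\mid e$, and the main part forces $m'=e/p^k$; so there is at most one such subgroup, namely $G^{e/p^k}$. The edge case $k=0$ (where $G^m=1$, so $e\mid m$ and thus $m=e$) is immediate and can be noted in passing.

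The easy half is the upper bound $e\mid mp^k$; the main obstacle is the lower bound. The subtlety is that in a nonabelian group powers and exponents do not behave multiplicatively, so one cannot directly produce an element of order $mp^k$. What rescues the argument is precisely that $G^m$ is cyclic: this both licenses the ``a generating set of a cyclic $p$-group must meet the complement of the maximal subgroup'' step and makes the final $\gcd$/$\operatorname{lcm}$ bookkeeping valid.
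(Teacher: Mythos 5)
Your proof is correct, but it takes a genuinely different route from the paper's. The paper works prime by prime on $m$ versus $e$: for each prime $q\ne p$ it takes an element $y$ of maximal $q$-power order $q^\ell$ and uses a Bezout identity $q^h=sq^\ell+tm$ (where $q^h$ is the exact power of $q$ dividing $m$) to place $y^{q^h}$ inside $G^m$, which forces $y^{q^h}=1$ because $G^m$ is a $p$-group; hence $h=\ell$ for every $q\ne p$, so $m=p^jd$ with $d$ the $p'$-part of $e$, and a final step identifies the $p$-part of $e$ as $p^{k+j}$. You instead prove the single equality $e=mp^k$ by two divisibilities: the upper bound $e\mid mp^k$ falls out of the inclusion $G^{mp^k}\le (G^m)^{p^k}=1$, and the lower bound $mp^k\mid e$ comes from the observation that the generating set $\{\,g^m : g\in G\,\}$ of the cyclic $p$-group $G^m$ must contain a generator, after which the valuation computation $\operatorname{lcm}(\operatorname{ord}(g),m)=mp^k$ gives a divisor of $e$. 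Both arguments are elementary; yours avoids the case analysis over the primes dividing $|G|$ and isolates exactly where cyclicity is used (a generating set of $C_{p^k}$ must meet the complement of the unique maximal subgroup), while the paper's version produces the Sylow-exponent data more explicitly along the way. You also make explicit the reduction $G^{m}=G^{\gcd(m,e)}$ needed to pass from arbitrary power subgroups to exponents dividing $e$ in the ``at most one'' clause, a point the paper leaves implicit; your handling of the $k=0$ edge case and the valuation bookkeeping both check out.
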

	
	\begin{proof}
		Suppose that for a divisor $m$ of $e$, the power subgroup $G^m$ is cyclic 
		of order~$p^k$. Let $q\ne p$ be a prime divisor of $|G|$ and let $y$ be an element 
		of order $q^\ell$, where $q^\ell$ is the exponent of a Sylow $q$-subgroup. If $q^h$ is 
		the highest power of $q$ that divides $m$, there exist integers $s$ and $t$ such that
		$q^h = sq^\ell + tm$.  Therefore $y^{q^h} = y^{tm}\in G^m$, thus $y^{q^h} = 1$ and 
		hence $h = \ell$. This proves that $m = p^jd$ for some $j$ where $d$ is 
		the product of the exponents of the Sylow $q$-subgroups with $q\ne p$.
		Therefore, for all $g\in G$, $g^d$ is a $p$-element and it follows that
		$p^{k+j}$ is the exponent of the Sylow $p$-subgroups of $G$. Thus $m = e/p^k$.
	\end{proof}
	
	%Furthermore, $|G^{mp^i}| = p^{k-i}$ for $0\le i\le k$.
	
	\begin{cor}\label{cor:power}
		Let $p^f$ be the largest order of a $p$-element of\/ $G$, where $p$ is a prime 
		divisor of\/ $|G|$. Let $p^k$ be the largest order of a cyclic power $p$-subgroup 
		of\/ $G$. If $p$ is odd and a Sylow $p$-subgroup is not cyclic, then $G$ contains at 
		least $pf - k + 1$ cyclic nonpower $p$-subgroups.
	\end{cor}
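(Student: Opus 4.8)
The plan is to reduce everything to a single Sylow $p$-subgroup and to count cyclic subgroups one order at a time. First I would fix a Sylow $p$-subgroup $P$ of $G$; by hypothesis $P$ is noncyclic and its exponent is exactly $p^f$. Every cyclic $p$-subgroup of $P$ is a cyclic $p$-subgroup of $G$, and every cyclic power $p$-subgroup of $G$ is characteristic, hence normal, hence contained in $P$. By Theorem \ref{thm:power} there is at most one cyclic power subgroup of each order $p^j$, and none of order exceeding $p^k$, so $G$ has at most $k$ cyclic power $p$-subgroups of positive order. Consequently it suffices to prove that $P$ contains at least $pf+1$ cyclic subgroups of positive order: subtracting the at most $k$ power ones then yields the required $pf-k+1$ nonpower cyclic $p$-subgroups.

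To count the cyclic subgroups of $P$, write $c_i$ for the number of cyclic subgroups of order $p^i$. Since $\exp(P)=p^f$, a cyclic subgroup of order $p^f$ supplies elements of every order $p^i$ with $i\le f$, so $c_i\ge 1$ for $1\le i\le f$. Because $p$ is odd and $P$ is noncyclic, $P$ has more than one subgroup of order $p$, and as the number of such subgroups is $\equiv 1\pmod p$ we get $c_1\ge p+1$. The heart of the argument is the claim that $c_i\ge p$ for every $i$ with $2\le i\le f$. Granting this, summation gives $\sum_{i=1}^{f}c_i\ge (p+1)+(f-1)p=pf+1$, as needed.

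The hard part will be establishing $c_i\ge p$ for the intermediate orders, where I must rule out the possibility that an order-$p^i$ cyclic subgroup only ever extends to a larger cyclic subgroup. I would argue by contradiction: suppose $c_i\le p-1$. Letting $P$ act by conjugation on the set of cyclic subgroups of order $p^i$, the orbit sizes are powers of $p$, so $c_i$ is congruent mod $p$ to the number of such subgroups that are normal; since $1\le c_i\le p-1$, at least one cyclic subgroup $C=\langle x\rangle$ of order $p^i$ must be normal in $P$. As $P$ is noncyclic with $p$ odd, $P$ has an element $y$ of order $p$ with $y\notin C$ (otherwise $C$ would contain the unique subgroup of order $p$ and $P$ would be cyclic). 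Then $H=\langle x,y\rangle$ has the normal cyclic subgroup $C$ of index $p$ and is noncyclic, since its order-$p$ subgroup $\langle y\rangle$ is not contained in $C$; so by the classification of $p$-groups with a cyclic maximal subgroup (for $p$ odd) $H\cong C_{p^i}\times C_p$ or $H\cong M_{i+1,p}$. A direct check shows each of these has exactly $p$ cyclic subgroups of order $p^i$, whence $c_i\ge p$, a contradiction. The delicate points to get right are the orbit-counting step that forces a normal copy of $C_{p^i}$ and the verification that each possible shape of $H$ contributes $p$ such subgroups; once these are in place the corollary follows from the bookkeeping of the first paragraph.
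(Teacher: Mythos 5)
Your proof is correct, and its overall skeleton is the one the paper uses: work inside a noncyclic Sylow $p$-subgroup $P$, bound below the number of cyclic subgroups of each order $p^i$ ($\ge p+1$ for $i=1$, $\ge p$ for $2\le i\le f$), and then subtract the at most $k$ cyclic power subgroups supplied by Theorem~\ref{thm:power}; your total $(p+1)+(f-1)p-k=pf-k+1$ is the same bookkeeping as the paper's $p+(k-1)(p-1)+p(f-k)$. The genuine difference is how the key counting fact is obtained. The paper simply cites Burnside for the existence of more than one subgroup of each order and Miller (see also Berkovich) for the congruences on the number of cyclic subgroups of order $p^m$. You instead prove the needed special case from scratch: if the number of cyclic subgroups of order $p^i$ were between $1$ and $p-1$, orbit counting under conjugation forces one of them, $C$, to be normal in $P$; since $p$ is odd and $P$ is noncyclic there is an order-$p$ element $y\notin C$, and $H=C\langle y\rangle$ is a noncyclic group of order $p^{i+1}$ with a cyclic maximal subgroup, hence isomorphic to $C_{p^i}\times C_p$ or $M_{i+1,p}$, each of which already contains $p$ cyclic subgroups of order $p^i$ --- a contradiction. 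This argument is sound (the classification applies because $i\ge 2$ gives $|H|\ge p^3$, and both candidate groups are regular of exponent $p^i$, so the count of $p$ maximal cyclic subgroups is easily verified), and it buys self-containedness at the cost of length: you re-derive the relevant fragment of Miller's theorem rather than quoting it. Either route is acceptable; just make sure, if you keep your version, to state explicitly that the congruence for the number of subgroups of order $p$ (Frobenius) is being invoked for the $i=1$ case, since that one is not covered by your orbit argument.
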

	
	\begin{proof}
		Let $P$ be a Sylow $p$-subgroup of $G$ and suppose that $P$ is not cyclic. By a 
		Theorem of Burnside \cite[\S105]{burnside:1955}, for all $m$ where $1\le m\le f$,
		$P$ contains more than one subgroup of order $p^m$. Miller\cite{miller:1929} has
		proved that for $m > 1$ the number of cyclic subgroups of order $p^m$ is divisible
		by $p$ and the number of subgroups of order $p$ is congruent to $1+p$ modulo $p^2$
		(see also Berkovich \cite[Th.\ 1.10]{berkovich:2008}). Therefore $P$ has at least 
		$p$ subgroups of order $p^i$ for $2\le i\le f$ and at least $p+1$ subgroups of
		order $p$. Furthermore, from Theorem \ref{thm:power} $P$ has exactly one cyclic 
		power subgroup of order $p^i$ for $1\le i\le k$. Therefore $G$ has at least 
		$p + (k-1)(p-1) + p(f-k) = pf-k+1$ cyclic nonpower $p$-subgroups.
	\end{proof}
	
	The following two lemmas play an essential r\^ole in the proof of
	the main theorem.
	
	\begin{lemma}[{\cite[Lemma 3]{anabanti-etal:2022}}]\label{lemma:prod}
		If $A$ and $B$ are finite groups such that $|A|$ and $|B|$ are coprime, then
		$\ps(A\times B) = \ps(A)\ps(B)$ and 
		$\nps(A\times B) = \nps(A)s(B) + \ps(A)\nps(B)$, where $s(B)$ is the number of
		subgroups of $B$.
	\end{lemma}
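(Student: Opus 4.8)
The plan is to reduce the statement to two structural facts: that coprimality forces every subgroup of $A\times B$ to split as a direct product, and that power subgroups split for any pair of groups. First I would record the general identity $(A\times B)^m = A^m\times B^m$, valid without any coprimality hypothesis. The inclusion $\subseteq$ is immediate from $(a,b)^m = (a^m,b^m)$, and $\supseteq$ holds because $(a^m,1) = (a,1)^m$ and $(1,b^m) = (1,b)^m$ already lie in $(A\times B)^m$. In particular every power subgroup of $A\times B$ has the form $A^m\times B^m$.

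Next I would count all subgroups. Since $\gcd(|A|,|B|) = 1$, every subgroup $H\le A\times B$ satisfies $H = (H\cap A)\times(H\cap B)$ (e.g.\ by Goursat's lemma the common section divides both $|A|$ and $|B|$, hence is trivial). Thus $(H_A,H_B)\mapsto H_A\times H_B$ is a bijection between pairs of subgroups and subgroups of the product, giving $s(A\times B) = s(A)s(B)$.

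The heart of the argument is $\ps(A\times B) = \ps(A)\ps(B)$, and this is where coprimality is genuinely needed beyond the subgroup decomposition. I would show that $(A^{m_1},B^{m_2})\mapsto A^{m_1}\times B^{m_2}$ is a bijection from pairs (power subgroup of $A$, power subgroup of $B$) onto the power subgroups of $A\times B$. Injectivity follows by intersecting with $A\times 1$ and $1\times B$, and every power subgroup $(A\times B)^m = A^m\times B^m$ is the image of the pair $(A^m,B^m)$, so the map is onto. The delicate point is that the assignment actually lands among power subgroups, i.e.\ that each product $A^{m_1}\times B^{m_2}$ equals $(A\times B)^m$ for a single exponent $m$. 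Here I would note that $A^m$ depends only on $m$ modulo the exponent $e_A$ of $A$, since $a^m = a^{m\bmod e_A}$, and likewise $B^m$ depends only on $m\bmod e_B$. As $|A|$ and $|B|$ are coprime, so are $e_A$ and $e_B$, and by the Chinese Remainder Theorem there is an $m$ with $m\equiv m_1\pmod{e_A}$ and $m\equiv m_2\pmod{e_B}$; then $(A\times B)^m = A^{m_1}\times B^{m_2}$. This yields the bijection and hence $\ps(A\times B) = \ps(A)\ps(B)$.

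Finally the nonpower count is pure bookkeeping. Writing $s = \ps + \nps$ on each factor, $\nps(A\times B) = s(A)s(B) - \ps(A)\ps(B) = (\ps(A)+\nps(A))(\ps(B)+\nps(B)) - \ps(A)\ps(B)$, which expands and regroups to $\nps(A)s(B) + \ps(A)\nps(B)$. The main obstacle is the Chinese Remainder step: it is tempting to treat $\ps(A\times B) = \ps(A)\ps(B)$ as an obvious consequence of the product decomposition, but one must verify that every combination of a power subgroup of $A$ with a power subgroup of $B$ is realized by one common exponent, and it is precisely the coprimality of $|A|$ and $|B|$ (equivalently of $e_A$ and $e_B$) that makes this possible.
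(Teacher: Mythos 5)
Your proof is correct and complete: the decomposition $(A\times B)^m=A^m\times B^m$, the splitting $H=(H\cap A)\times(H\cap B)$ of arbitrary subgroups under coprimality, and the Chinese Remainder step realizing any pair $(A^{m_1},B^{m_2})$ by a single exponent are exactly the ingredients needed, and the final count follows by the inclusion--exclusion you give. The paper itself offers no proof, citing Anabanti \emph{et al.}\ for this lemma, and your argument is the standard one for that result; you are also right to flag the CRT step as the place where coprimality (rather than merely the subgroup decomposition) is essential.
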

	
	\begin{lemma}[{\cite[Lemma 2]{zhou-shi-duan:2006}}]\label{lemma:quo}
		Suppose $N$ and $H$ are subgroups of $G$ such that $N\normal G$. If $HN/N$ is a 
		nonpower subgroup of $G/N$, then $H$ is a nonpower subgroup of $G$. Therefore 
		$\nps(G)\ge \nps(G/N)$.
	\end{lemma}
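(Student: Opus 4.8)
The plan is to prove the contrapositive of the main implication, which reduces the whole statement to a single elementary identity relating power subgroups to quotients. The identity I would establish first is that, for every positive integer $m$,
\[
(G/N)^m = G^m N/N.
\]
This follows from the definitions together with the standard fact that, for the canonical projection $\pi\colon G\to G/N$ and any subset $S\subseteq G$, the image of the generated subgroup satisfies $\langle \pi(S)\rangle = \pi(\langle S\rangle)$. Taking $S = \{\,g^m\mid g\in G\,\}$, the left-hand side is $(G/N)^m$ and the right-hand side is $\pi(G^m) = G^m N/N$.

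With this identity available, the first assertion is a one-line contrapositive. I would suppose that $H$ is a power subgroup of $G$, say $H = G^m$, and then compute
\[
HN/N = G^m N/N = (G/N)^m,
\]
so that $HN/N$ is a power subgroup of $G/N$. Reading this backwards: if $HN/N$ is a nonpower subgroup of $G/N$, then $H$ is \emph{not} a power subgroup of $G$, i.e. $H$ is a nonpower subgroup of $G$. No assumption relating $H$ and $N$ is needed.

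For the inequality $\nps(G)\ge\nps(G/N)$ I would invoke the correspondence theorem. Every subgroup of $G/N$ is of the form $H/N$ for a unique $H$ with $N\le H\le G$, and this is a bijection onto the subgroups of $G$ containing $N$. If $M = H/N$ is a nonpower subgroup of $G/N$, then since $N\le H$ we have $HN/N = M$, so the first part shows $H$ is a nonpower subgroup of $G$. Hence $M\mapsto H$ injects the nonpower subgroups of $G/N$ into the nonpower subgroups of $G$, which gives the inequality.

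The argument is essentially routine rather than hard; the only step requiring genuine care is the verification of $(G/N)^m = G^m N/N$ (in particular the equality $\langle \pi(S)\rangle = \pi(\langle S\rangle)$), and, in the counting step, the observation that the subgroups $H$ produced by the correspondence all contain $N$, so that the resulting map really is an injection into the nonpower subgroups of $G$.
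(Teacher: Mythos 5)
Your argument is correct and is exactly the standard proof (the paper only cites this lemma from Zhou--Shi--Duan without reproducing a proof): the identity $(G/N)^m = G^mN/N$ via $\langle\pi(S)\rangle=\pi(\langle S\rangle)$, the contrapositive for the first assertion, and the correspondence theorem for the count. Your observation that no containment between $H$ and $N$ is needed matches the paper's own remark that the hypothesis $N\subseteq H$ from the earlier version can be dropped.
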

	
	\begin{rem}
		This lemma was stated in \cite{zheng-etal:2023} with the additional condition
		$N\subseteq H$. However, the same proof shows that it remains true without
		this restriction. It often can be used to show that $\nps(G)\ge 2\nps(G/N)$. For 
		example, suppose that $|N|$ is coprime to $|G/N|$.  From the Schur--Zassenhaus 
		Theorem \cite[Th.\ 6.2.1]{gorenstein:1968}, $G$ is a semidirect product 
		$N\rtimes K$. In this case, if $H\subseteq K$ and if $HN/N$ is a nonpower subgroup
		of $G/N$, then both $H$ and $HN$ are nonpower subgroups of $G$. Since $H\ne HN$
		it follows that $\nps(G)\ge 2\nps(G/N)$.
	\end{rem}
	
	\begin{lemma}[\cite{zheng-etal:2023}, Theorem 2.12]\label{lemma:qd}
		There is no finite $p$-group $G$ such that $G/N \simeq M_{n,p}$, where $N$ is a 
		central subgroup of $G$ of order $p$ contained in $G'$.
	\end{lemma}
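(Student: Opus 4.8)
The plan is to argue by contradiction and reduce the whole question to an impossible automorphism of a cyclic group. Suppose such a $G$ exists, with $N=\langle z\rangle$ of order $p$ and $G/N\cong M_{n,p}$ (so $n\ge 3$ for odd $p$ and $n\ge 4$ for $p=2$). Since $N\le G'\le\Phi(G)$, we have $\Phi(G/N)=\Phi(G)/N$ and hence $G/\Phi(G)\cong M_{n,p}/\Phi(M_{n,p})$, which is $2$-generated; so $G=\langle a,b\rangle$ where $a,b$ lift the standard generators $\bar a,\bar b$ of $M_{n,p}$. Lifting the three defining relations of $M_{n,p}$ produces integers $\alpha,\beta,\gamma$ with
\[
a^p=z^\alpha,\qquad b^{p^{n-1}}=z^\beta,\qquad a^{-1}ba=b^{1+p^{n-2}}z^\gamma .
\]
First I would pin down $G'$: because $N\le G'$ we get $G'/N=(G/N)'=M_{n,p}'\cong C_p$, and therefore $|G'|=p^2$.

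Next I would compute inside $G$. The conjugation relation gives $[b,a]=b^{p^{n-2}}z^\gamma$, and raising to the $p$-th power (using that $z$ is central of order $p$ and that $b^{p^{n-1}}=z^\beta$) yields $[b,a]^p=z^\beta$. Since $M_{n,p}$ has class $2$ for the relevant $n$, we have $\gamma_3(G)\le N$, hence $\gamma_4(G)=1$, and a short collection shows $[[b,a],b]=1$ always, while $[[b,a],a]=z^\beta$ for $n=3$ and $[[b,a],a]=1$ for $n\ge 4$. In every case $\gamma_3(G)$ is generated by $z^\beta=[b,a]^p$, so $G'=\langle[b,a]\rangle$ is cyclic. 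As $|G'|=p^2$, this forces $[b,a]^p=z^\beta\ne 1$, i.e.\ $p\nmid\beta$. Consequently $z=(b^{p^{n-1}})^{\beta^{-1}}\in\langle b\rangle$, the element $b$ has order $p^n$, and $\langle b\rangle\cong C_{p^n}$ is a normal subgroup of index $p$; thus $G$ is metacyclic with cyclic maximal subgroup $\langle b\rangle$.

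The contradiction then comes from the induced automorphism. Rewriting $z^\gamma$ as a power of $b$ turns the conjugation relation into $a^{-1}ba=b^{s}$ with $s=1+p^{n-2}u$ and $u=1+p\gamma\beta^{-1}$, so that $u$ is a unit modulo $p$. Since $a^p=z^\alpha$ is central, it commutes with $b$, so conjugation by $a$ is an automorphism of $\langle b\rangle\cong C_{p^n}$ of order dividing $p$, i.e.\ $s^p\equiv 1\pmod{p^n}$. But a binomial expansion shows that, in the stated ranges of $p$ and $n$, every term with $k\ge 2$ is divisible by $p^n$, whence $s^p\equiv 1+p^{n-1}u\pmod{p^n}$; and $p\nmid u$ makes $p^{n-1}u\not\equiv 0\pmod{p^n}$. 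Therefore $s^p\not\equiv 1\pmod{p^n}$, which is the desired contradiction.

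I expect the main obstacle to be the bookkeeping that forces $\beta\ne 0$: one must show $G'$ is cyclic, which requires controlling $\gamma_3(G)$, and this is exactly where the boundary case $n=3$ (the extraspecial quotient $M_{3,p}$) behaves differently and must be treated with care, before invoking $|G'|=p^2$. Once $G$ is known to be metacyclic with $\langle b\rangle\cong C_{p^n}$ of index $p$, the finish is the clean observation that $s\equiv 1+p^{n-2}u$ with $p\nmid u$ describes an automorphism of order $p^2$ rather than $p$. Conceptually the lemma simply asserts that $M_{n,p}$ has trivial Schur multiplier and so admits no nontrivial stem extension; the computation above is a self-contained verification of this fact.
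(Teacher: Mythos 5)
Your argument is correct and complete. Note first that this paper does not prove the lemma at all --- it imports it verbatim from \cite{zheng-etal:2023} (Theorem 2.12) --- so there is no in-paper proof to compare against; what you have written is a valid self-contained verification of the cited result. The key steps all check out: $N\le G'\le\Phi(G)$ gives $2$-generation; $|G'|=p^2$; $[b,a]=b^{p^{n-2}}z^\gamma$ with $[b,a]^p=z^\beta$; the class-$2$ quotient forces $\gamma_3(G)\le N$, and your commutator computation correctly gives $\gamma_3(G)=\langle z^\beta\rangle$ when $n=3$ and $\gamma_3(G)=1$ when $n\ge 4$. One sentence is loosely phrased: ``in every case $\gamma_3(G)$ is generated by $z^\beta$'' is literally false for $n\ge 4$ if $p\nmid\beta$, but what you actually need --- $\gamma_3(G)\le\langle[b,a]\rangle$, hence $G'=\langle[b,a]\rangle\gamma_3(G)=\langle[b,a]\rangle$ --- holds in both cases, so this is a presentational slip rather than a gap. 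From there $p\nmid\beta$, $z\in\langle b\rangle$, $|b|=p^n$, and the order computation $s^p\equiv 1+p^{n-1}u\not\equiv 1\pmod{p^n}$ (your divisibility bounds for the $k\ge 2$ binomial terms are right precisely in the stated ranges $n\ge 3$ for odd $p$ and $n\ge 4$ for $p=2$) delivers the contradiction. A slightly shorter finish, more in the spirit of how this paper repeatedly argues, is available once you know $\langle b\rangle$ is a cyclic normal subgroup of index $p$: by \cite[Th.\ 5.4.4]{gorenstein:1968} a nonabelian $p$-group with a cyclic maximal subgroup is $M_{n+1,p}$, $D_{2^{n+1}}$, $S_{2^{n+1}}$ or $Q_{2^{n+1}}$, and none of these has $|G'|=p^2$ in the admissible range of $n$, so the explicit automorphism-order computation can be replaced by that classification.
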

	
	The following lemma from \cite{zheng-etal:2023} is a consequence of Lemma 2.2 and 
	Theorem 2.3 of Blackburn \cite{blackburn:1958}.
	\begin{lemma} \label{lemma:R}
		For a non-abelian $p$-group $G$ generated by two elements, let $R=\Phi(G')G_{3}$ 
		where $G_{3}=[[G,G],G]$. Then 
		\begin{enumerate}[\rm(i)]
			\item $R$ is the only maximal subgroup of $G'$ that is normal in $G$,
			\item $G$ is metacyclic if and only if $G/R$ is metacyclic, 
			\item If the type of $G/G'$ is $(p,p^{n})$ and $G/R$ has no cyclic maximal subgroup, then $G/R$ is isomorphic to $B_{n,p}^{1}$ or $B_{n,p}^{2}$.
		\end{enumerate}
	\end{lemma}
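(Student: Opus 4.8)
The plan is to obtain the three assertions from Blackburn's Lemma 2.2 and Theorem 2.3 \cite{blackburn:1958}, supplying the short intervening group theory. For (i), I would first show that every maximal subgroup $M$ of $G'$ that is normal in $G$ contains $R$, and then that $R$ is itself such a subgroup. Since $M$ is maximal in $G'$ and normal in $G$, the quotient $G'/M$ has order $p$ and $G$ acts on it; as $\operatorname{Aut}(C_p)$ has order $p-1$, prime to $p$, this action is trivial, so $G_3=[G',G]\subseteq M$, and $\Phi(G')\subseteq M$ because $G'/M$ is cyclic of order $p$, whence $R\subseteq M$. To see that $R$ is maximal in $G'$, note that $G'/G_3$ is the derived subgroup of the class-$2$ group $G/G_3$ and is therefore cyclic, and it is nontrivial because nilpotency forbids $G'=[G',G]$ when $G$ is nonabelian; since $[G',G']\subseteq G_3$, the image of $\Phi(G')=(G')^p[G',G']$ in $G'/G_3$ equals $(G'/G_3)^p=\Phi(G'/G_3)$, of index $p$. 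Hence $|G'/R|=p$, so $R$ is maximal and coincides with the unique $M$ above; this is the content of Blackburn's Lemma 2.2.

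For (ii), one direction is immediate, since a quotient of a metacyclic group is metacyclic, so $G$ metacyclic forces $G/R$ metacyclic. The converse is the substantive part and is precisely Blackburn's metacyclicity criterion (Theorem 2.3): because $R\subseteq G'\subseteq\Phi(G)$, for a two-generator $p$-group the metacyclic property is already detected in the quotient $G/R$. I would simply invoke that theorem.

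For (iii), write $H=G/R$. By (i), $|H|=|G/G'|\,|G'/R|=p^{n+1}\cdot p=p^{n+2}$, the subgroup $H'=G'/R$ has order $p$ and is central (as $G_3\subseteq R$), so $H$ has class $2$, and $H/H'\cong G/G'$ has type $(p,p^n)$. I would then classify all such $H$ having no cyclic maximal subgroup. Choose generators $a,b$ of $H$ whose images form a basis of $H/H'\cong C_p\times C_{p^n}$, with $\bar a$ of order $p$ and $\bar b$ of order $p^n$; then $H$ is determined up to isomorphism by $c=[a,b]$, which generates $H'$ and has order $p$, together with the power values $a^p=c^i$ and $b^{p^n}=c^j$ in $H'\cong\mathbb F_p$. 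If $j\neq 0$ then $b$ has order $p^{n+1}$ and $\langle b\rangle$ is a cyclic maximal subgroup, so the hypothesis forces $j=0$; the two remaining cases $i=0$ and $i\neq 0$ yield (after normalizing $i$) the presentations of $B^1_{n,p}$ and $B^2_{n,p}$, in agreement with Blackburn's Theorem 2.3.

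The main obstacle is that the reverse implication in (ii) is a genuine theorem of Blackburn rather than a routine deduction, so the weight of the argument rests on citing it correctly. In (iii) the delicate points are checking that the case analysis is exhaustive and that the surviving cases genuinely have no cyclic maximal subgroup: this uses $n\ge 2$ and, for $p=2$, the class-$2$ power formula $(a^sb^t)^p=a^{sp}b^{tp}c^{-st\binom p2}$, which is exactly why $B^2_{2,2}=C_4\rtimes C_4$ must be handled separately, and one must also reconcile the choice of generators with the named presentations of $B^1_{n,p}$ and $B^2_{n,p}$ (the excluded groups with $j\neq 0$ being the metacyclic $M_{n+2,p}$-type quotients).
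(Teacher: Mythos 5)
Your proposal takes essentially the same route as the paper, which offers no independent proof but simply records that the lemma is a consequence of Blackburn's Lemma 2.2 and Theorem 2.3; your argument for (i), the trivial direction plus citation in (ii), and the reduction in (iii) to the presentations with $j=0$ correctly supply the routine details that the paper (following \cite{zheng-etal:2023}) leaves to that citation. The proposal is correct.
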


	% ====================
	\section{Nonpower values}\label{sec:values}
	Section 3 of \cite{zheng-etal:2023} provides formulas for the number of nonpower 
	subgroups for many families of groups. For convenience, we summarise this 
	information in the following lemma, then prove formulas for the additional values 
	of $\nps(G)$ needed in the proof of the main result.  The formulas for 
	$\nps(M_{n,p})$, $\nps(G_{n,p^k})$ and $\nps(F_{n,p})$ are special cases
	of Lemmas \ref{lemma:Gpp} and \ref{lemma:Gnp} below.
	
	\begin{lemma}\label{lemma:list}
		For an integer $n$ and a prime $p$ we have:
		\begin{enumerate}[\rm (i)]
			\item for $n\ge 3$, $\nps(D_{2^n}) = 2^n - 1$;
			\item for $n\ge 3$, $\nps(Q_{2^n}) = 2^{n-1} - 1$;
			\item for $n\ge 4$, $\nps(S_{2^n}) = 3\cdot 2^{n-2} - 1$;
			\item for $n\ge 3$, $\nps(M_{n,p}) = p(n-1) + 1$ (when $p = 2$, assume $n\ge 4$);
			\label{i:QD}
			\item $\nps(M(p)) = p^2 + 2p + 2$;\label{i:esp}
			\item if $p > 2$, then $\nps(G_{n,p^k}) = p(p^k-1)/(p-1)$;
			\item if $p\equiv 1\pmod 3$, then $\nps(F_{n,p}) = p$;
			\item for $n\ge 1$, $\nps(A_n) = 3n + 4$;\label{i:extA4}
			\item \label{i:Cp}
			{\rm \cite[Th. 3.3]{Marius:2010}}
			for $n_{2}\ge n_{1}\ge 1$ and a prime $p$ the value of 
			$\nps(C_{p^{n_{1}}}\times C_{p^{n_{2}}})$ is
			\[
			\hspace{-4pt}\frac{(n_{2}-n_{1}+1)p^{n_{1}+2}-(n_{2}-n_{1}-1)p^{n_{1}+1}-(n_2+1)p^2+(n_{2}-n_{1}+1)p+n_{2}}{(p-1)^{2}}.
			\]
		\end{enumerate}
	\end{lemma}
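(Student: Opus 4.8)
The unifying principle is that $\nps(G) = s(G) - \ps(G)$, where $s(G)$ is the total number of subgroups, so for each family I would separately pin down the power subgroups and the full subgroup count. Identifying the power subgroups is the routine half: every $G^m$ is characteristic, and for a $p$-group one has $G^m = G^{p^v}$ with $v$ the $p$-adic valuation of $m$ (since raising to an exponent coprime to $p$ permutes each cyclic subgroup), while Theorem \ref{thm:power} guarantees at most one cyclic power subgroup of each order. Consequently, in every $p$-group case the distinct power subgroups form a single descending chain $G = G^{p^0}\supset G^{p}\supset\cdots\supset 1$ that is easy to list. The entire difficulty is thus pushed into counting all subgroups, and in fact the bulk of this lemma assembles the values computed in Section 3 of \cite{zheng-etal:2023}, supplemented by item (ix).

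For items (i)--(iv) the groups $D_{2^n}$, $Q_{2^n}$, $S_{2^n}$ and $M_{n,p}$ all possess a cyclic maximal subgroup, and their subgroup lattices are classical (see \cite[Ch.\,5]{gorenstein:1968} and \cite[Ch. III]{huppert:1967}). In each case I would enumerate the subgroups of the cyclic maximal subgroup together with the finitely many remaining subgroups, identify the power chain running through that cyclic maximal subgroup, and subtract, checking the small cases $D_8$, $Q_8$, $M_{4,2}$ against the known values of Theorems \ref{thm:upto4} and \ref{thm:upto9} to guard against off-by-one errors. Since $M_{n,p} = G^{(1+p^{n-2})}_{p,1;\,p,n-1}$, the formula $\nps(M_{n,p}) = p(n-1)+1$ is also recovered as a special case of the general metacyclic count in Lemma \ref{lemma:Gpp}; likewise items (vi) and (vii), being the semidirect products $G^{(-1)}_{2,n;\,p,k}$ and $G^{(r)}_{3,n;\,p,1}$, are special cases of Lemmas \ref{lemma:Gpp} and \ref{lemma:Gnp}, so for these I would simply substitute the parameters rather than recount.

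For the two remaining families I would argue directly from the group structure. For the exponent-$p$ extraspecial group $M(p)$ (item (v)) the exponent is $p$, so $G^p = 1$ and the only power subgroups are $G$ and $1$; every proper nontrivial subgroup is either one of the $p^2+p+1$ subgroups of order $p$ or one of the $p+1$ maximal subgroups $\cong C_p\times C_p$, giving $s(M(p)) = p^2+2p+4$ and hence $\nps(M(p)) = p^2+2p+2$. For $A_n = (C_2\times C_2)\rtimes C_{3^n}$ (item (viii)) the Klein four subgroup $V$ is the unique minimal normal subgroup and the centre is the index-three subgroup of the top factor; quotienting by the central subgroup of order $3$ gives $A_n/C_3\cong A_{n-1}$, so Lemma \ref{lemma:quo} yields $\nps(A_n)\ge\nps(A_{n-1})$, and I would stratify the subgroups by their intersection with $V$ and their image in $C_{3^n}$ to show the increment is exactly $3$, so that $\nps(A_n) = 3n+4$ follows by induction from $\nps(A_1) = \nps(\Alt(4)) = 7$.

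The genuine obstacle is item (ix), the two-generator abelian $p$-group $C_{p^{n_1}}\times C_{p^{n_2}}$. Here the power subgroups are transparent: they are exactly $G^{p^i} = C_{p^{\max(n_1-i,0)}}\times C_{p^{n_2-i}}$ for $0\le i\le n_2$, a chain of $n_2+1$ subgroups, so $\nps(G) = s(G) - (n_2+1)$. The difficulty lies wholly in $s(G)$, since counting all subgroups of a rank-two abelian $p$-group is a nontrivial combinatorial problem solved by a recursion over subgroup types. I would therefore cite \cite[Th.\ 3.3]{Marius:2010} for the closed form of $s(G)$ and subtract the $n_2+1$ power subgroups to obtain the displayed expression, verifying the normalisation against the known values $\nps(C_p\times C_p) = p+1$ and $\nps(C_p\times C_{p^2}) = 2p+1$ (so in particular $\nps(C_2\times C_4) = 5$).
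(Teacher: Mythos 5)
Your overall strategy --- compute $s(G)$ and the power subgroups separately and subtract --- is sound, and it is necessarily a different route from the paper's, because the paper does not prove this lemma at all: items (i)--(viii) are quoted from Section 3 of \cite{zheng-etal:2023}, items (iv), (vi), (vii) are additionally noted to be special cases of Lemmas \ref{lemma:Gpp} and \ref{lemma:Gnp} (exactly as you observe), and item (ix) is attributed to \cite{Marius:2010}. Your explicit computations check out where you give them: the count $s(M(p))=p^2+2p+4$ with two power subgroups is correct, and for (viii) the base case $\nps(A_1)=\nps(\Alt(4))=7$ and the increment of $3$ are right. Do note, however, that $A_n$ is not a $p$-group and its power subgroups do \emph{not} form a chain --- for example $A_2$ has the five power subgroups $G$, $G^3$, $G^6=Z(G)$, $G^9\simeq C_2\times C_2$ and $G^{18}=1$, with $G^6$ and $G^9$ incomparable --- so your stratification for (viii) must track that $s$ grows by $5$ and $\ps$ by $2$ at each step, netting the increment of $3$.

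The one point of substance concerns item (ix), where your proposed sanity check is more than a formality: carried out, it shows that the formula \emph{as displayed in the lemma} is misprinted. Subtracting the $n_2+1$ power subgroups from T\u{a}rn\u{a}uceanu's count of all subgroups of $C_{p^{n_1}}\times C_{p^{n_2}}$ yields
\[
\frac{(n_{2}-n_{1}+1)p^{n_{1}+2}-(n_{2}-n_{1}-1)p^{n_{1}+1}-(n_2+1)p^2+(n_{2}-n_{1}-1)p+n_{1}}{(p-1)^{2}},
\]
whose final two terms differ from those printed. The printed expression gives $7$ for $C_2\times C_2$ (instead of $3$) and $10$ for $C_2\times C_4$ (instead of $5$), and it is not even an integer for $C_3\times C_3$; the corrected expression returns $p+1$ and $2p+1$ in those cases and agrees with every value used elsewhere in the paper ($\nps(C_4\times C_4)=12$, $\nps(C_5\times C_{25})=11$, $\nps(C_2\times C_{16})=9$, and so on). So your method for (ix) is the right one, and the verification step you describe is precisely what catches the error --- just be sure to actually carry it out rather than transcribing the displayed formula.
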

	
	\begin{lemma}\label{lemma:Gpp}
		For prime $p>2$, $\nps(G^{(r)}_{p,n;p,m}) = \nps(C_{p^n}\times C_{p^m})$.
	\end{lemma}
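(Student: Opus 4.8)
The plan is to write $\nps(G) = s(G) - \ps(G)$, where $G = G^{(r)}_{p,n;p,m}$, $s(G)$ is the total number of subgroups and $\ps(G)$ the number of power subgroups, and to prove separately that each of $s$ and $\ps$ agrees with its value on the abelian group $A = C_{p^n}\times C_{p^m}$, which is the case $r=1$. Write $N = \langle b\rangle \cong C_{p^m}$, a normal cyclic subgroup with $G/N = \langle aN\rangle \cong C_{p^n}$. Since the order of $r$ modulo $p^m$ is a power of $p$, we have $r \equiv 1 \pmod p$; write $r = 1 + cp^t$ with $p\nmid c$ and $t\ge 1$ (or $r=1$).

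For the total count I would classify each subgroup $H$ by the two invariants $H\cap N = \langle b^{p^s}\rangle$ and $HN/N = \langle a^{p^u}N\rangle$, with $0\le s\le m$ and $0\le u\le n$. A subgroup with these invariants is generated by $b^{p^s}$ together with one element $x = a^{p^u}b^{e}$ whose image generates $HN/N$, and two values of $e$ give the same $H$ exactly when they agree modulo $p^s$. The only constraint is $x^{p^{n-u}} \in \langle b^{p^s}\rangle$; a direct computation gives $x^{p^{n-u}} = b^{eS}$ with $S = 1 + \rho + \cdots + \rho^{p^{n-u}-1}$ and $\rho = r^{p^u}$, so the constraint reads $eS \equiv 0 \pmod{p^s}$. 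The number of admissible residues $e \bmod p^s$ is therefore $p^{\min(s,\,v_p(S))}$.

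The key step, and the place where the hypothesis $p>2$ is essential, is the evaluation $v_p(S) = n-u$. Since $\rho\equiv 1\pmod p$, the lifting-the-exponent identity $v_p(\rho^{p^k}-1) = v_p(\rho-1)+k$ (valid for odd $p$) applied to $S = (\rho^{p^{n-u}}-1)/(\rho-1)$ gives $v_p(S) = (t+u+(n-u)) - (t+u) = n-u$, which is exactly the value $v_p(p^{n-u})$ obtained in the abelian case $r=1$. Hence the number of subgroups with invariants $(s,u)$ is $p^{\min(s,\,n-u)}$ for \emph{every} admissible $r$, so $s(G) = \sum_{u=0}^{n}\sum_{s=0}^{m}p^{\min(s,\,n-u)} = s(A)$. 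For the power subgroups I would use that a metacyclic $p$-group with $p$ odd is regular, so $G^{p^i}$ equals the set of $p^i$-th powers and $G^{p^i} = \langle a^{p^i}, b^{p^i}\rangle$, of order $p^{\max(n-i,0)+\max(m-i,0)}$; thus the chain $G = G^{p^0} \supsetneq G^{p^1}\supsetneq\cdots$ strictly descends to $1$ at $i = \max(n,m)$, giving $\ps(G) = \max(n,m)+1 = \ps(A)$. Combining, $\nps(G) = s(G) - \ps(G) = s(A) - \ps(A) = \nps(A)$.

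The main obstacle is the bookkeeping in the subgroup classification: verifying that each generated set really is a subgroup with the stated invariants, that distinct residues modulo $p^s$ give distinct subgroups, and that the boundary cases $n-u\ge m$ (where $S\equiv 0$ and every $e$ is admissible) are absorbed uniformly into the formula $p^{\min(s,\,n-u)}$. Once that is in place, the entire comparison with the abelian group reduces to the single valuation identity $v_p(S) = n-u$, whose failure for $p=2$ (there the lifting-the-exponent lemma carries an extra term involving $v_2(\rho+1)$) is precisely why the statement is restricted to odd primes.
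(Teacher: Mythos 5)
Your proposal is correct, and it shares the paper's overall skeleton: write $\nps(G)=s(G)-\ps(G)$ for $G=G^{(r)}_{p,n;\,p,m}$ and match each term with its value on $C_{p^n}\times C_{p^m}$. The treatment of $\ps$ is essentially identical in both arguments (for odd $p$ the group is regular, so $G^{p^i}$ is the set of $p^i$-th powers, the chain $G\supset G^p\supset\cdots$ descends to $1$ at $i=\max(n,m)$, and $\ps(G)=\max(n,m)+1$). Where you genuinely diverge is the count of $s(G)$: the paper disposes of this in one line by citing Mann's Proposition~1 to the effect that $G$ is lattice-isomorphic to $C_{p^n}\times C_{p^m}$, whereas you prove the numerical equality $s(G)=s(C_{p^n}\times C_{p^m})$ directly, via the standard parametrization of subgroups of a split metacyclic group by the invariants $H\cap\langle b\rangle=\langle b^{p^s}\rangle$ and $H\langle b\rangle/\langle b\rangle=\langle a^{p^u}\langle b\rangle\rangle$, reducing everything to the valuation identity $v_p\bigl(1+\rho+\cdots+\rho^{p^{n-u}-1}\bigr)=n-u$ from lifting the exponent. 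Your route is longer and requires the bookkeeping you flag (in particular, one should note that $\langle b^{p^s}\rangle\trianglelefteq G$, so the candidate subgroup equals the product $\langle b^{p^s}\rangle\langle a^{p^u}b^e\rangle$ and its intersection with $\langle b\rangle$ is exactly $\langle b^{p^s}\rangle$ under the stated constraint), but it is self-contained and makes visible precisely where $p>2$ enters: the extra $v_2(\rho+1)$ term in the $p=2$ version of lifting the exponent is what breaks the count, consistent with the dihedral, quaternion and semidihedral exceptions recorded in Lemma~\ref{lemma:list}. The paper's route buys brevity at the cost of invoking a stronger structural fact; yours buys transparency and an explicit explanation of the parity hypothesis.
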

	\begin{proof}
		Let $G = G^{(r)}_{p,n;\,p,m}$. Since $G'$ is cyclic, we know from 
		\cite[III \S10]{huppert:1967} that $G$ is a regular $p$-group. Then from 
		\cite[Th.\ 4.21]{hall:1934}, $G^{p^s}$ is the set $\{\,g^{p^s}\mid g\in G\,\}$. 
		Thus there are exactly $\max(n,m)+1$ power subgroups in $G$. This means that 
		$\ps(G)=\ps(C_{p^n}\times C_{p^m})$. It follows from 
		\cite[Prop.\ 1]{mann:2010} that $G$ is lattice-isomorphic to 
		$C_{p^n}\times C_{p^m}$. Thus $s(G) = s(C_{p^n}\times C_{p^m})$ and therefore 
		$\nps(G) = \nps(C_{p^n}\times C_{p^m})$.
	\end{proof}
	
	From (i) and (ix) of Lemma \ref{lemma:list} we know that for $p=2$ the conclusion of the Lemma \ref{lemma:Gpp} is not valid.
	
	\begin{lemma}\label{lemma:Gnp}
		Suppose that $p\ne q$ are primes and that $m$, $n$ and $r$ are positive integers such 
		that $r\ne 1$ and $r^{p^n}\equiv 1\pmod{q^m}$. Let $G = G^{(r)}_{p,n;q,m}$ and let 
		$p^k$ be the order of $r\pmod{q^m}$. Then $p\mid q - 1$ and $\nps(G) = kq(q^m-1)/(q-1)$.
	\end{lemma}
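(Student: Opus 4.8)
The plan is to compute $\nps(G)$ as $s(G)-\ps(G)$, where $s(G)$ is the total number of subgroups. Throughout, write $Q=\langle b\rangle\cong C_{q^m}$, which is the normal Sylow $q$-subgroup, and $\langle a\rangle\cong C_{p^n}$ a Sylow $p$-subgroup, so $G=Q\rtimes\langle a\rangle$ with $a$ acting by $b\mapsto b^{r^{-1}}$. The divisibility $p\mid q-1$ is immediate: the order $p^k$ of $r$ modulo $q^m$ divides $|\mathrm{Aut}(C_{q^m})|=q^{m-1}(q-1)$, and since $p\ne q$ this forces $p^k\mid q-1$; as $r\ne 1$ we have $k\ge 1$.

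The arithmetic engine of the whole argument is the observation that the $p$-part of $(\mathbb{Z}/q^j\mathbb{Z})^\times\cong C_{q^{j-1}(q-1)}$ has order equal to the $p$-part of $q-1$ for \emph{every} $j\ge 1$, because the reduction maps $(\mathbb{Z}/q^m\mathbb{Z})^\times\to(\mathbb{Z}/q^j\mathbb{Z})^\times$ are injective on $p$-parts (their kernels are $q$-groups). Hence the order of $r$ modulo $q^j$ is $p^k$ for \emph{all} $1\le j\le m$, and consequently $r^{p^s}\equiv 1\pmod{q^j}$ if and only if $s\ge k$, uniformly in $j$. In particular $r\not\equiv 1\pmod q$, so $r^{p^s}-1$ is a unit modulo $q$ whenever $s<k$. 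This clean dichotomy ($\gcd(r^{p^s}-1,q^j)=1$ for $s<k$ and $=q^j$ for $s\ge k$) is what I expect to be the main point; everything else is bookkeeping built on it.

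To count $s(G)$ I would classify each subgroup $H$ by its type $(j,s)$, where $H\cap Q=\langle b^{q^j}\rangle=:Q_j$ (of order $q^{m-j}$) and $HQ/Q=\langle \bar a^{p^s}\rangle$, with $0\le j\le m$ and $0\le s\le n$. Setting $G_s=\langle b,a^{p^s}\rangle$, the subgroups of type $(j,s)$ correspond bijectively to complements of $Q/Q_j$ in $G_s/Q_j\cong C_{q^j}\rtimes C_{p^{n-s}}$. Since the action is coprime, $H^1$ vanishes, so all complements are conjugate and their number is the index of the fixed-point subgroup, namely $q^j/\gcd(r^{p^s}-1,q^j)$. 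By the dichotomy this is $q^j$ when $s<k$ and $1$ when $s\ge k$. Summing gives
\[
s(G)=\sum_{s=0}^{k-1}\sum_{j=0}^{m}q^{j}+\sum_{s=k}^{n}\sum_{j=0}^{m}1
=k\sum_{j=0}^{m}q^{j}+(n-k+1)(m+1).
\]

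Finally I would pin down the power subgroups. Since raising to an exponent coprime to $|G|$ is a bijection, every power subgroup equals some $G^{p^sq^t}$ with $0\le s\le n$, $0\le t\le m$. For $s<k$ the conjugates of $\langle a^{p^s}\rangle$ generate $Q$ (the differences of the $p^s$-th powers of the Sylow $p$-generators produce $b^{l(r^{-p^s}-1)}$, and $r^{-p^s}-1$ is a unit mod $q$), so $G^{p^sq^t}=G_s$ independently of $t$, giving the $k$ distinct subgroups $G_0,\dots,G_{k-1}$. For $s\ge k$ the subgroup $G_s=\langle a^{p^s}\rangle\times Q$ is abelian and $G^{p^sq^t}=\langle a^{p^s},b^{q^t}\rangle$, yielding $(n-k+1)(m+1)$ distinct subgroups. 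Thus $\ps(G)=k+(n-k+1)(m+1)$, and subtracting,
\[
\nps(G)=s(G)-\ps(G)=k\sum_{j=0}^{m}q^{j}-k=k\sum_{j=1}^{m}q^{j}=\frac{kq(q^{m}-1)}{q-1},
\]
as claimed. I would check the computation against $\mathrm{Dic}_3=G^{(-1)}_{2,2;3,1}$ ($n=2,m=1,k=1,q=3$), where $s(G)=8$, $\ps(G)=5$, and $\nps(G)=3$, matching both this formula and Lemma~\ref{lemma:list}(vi).
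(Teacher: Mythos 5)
Your proposal is correct, and it reaches the formula by a genuinely different route from the paper. The paper argues directly on the nonpower side: it shows $N_G(\langle a^{p^i}\rangle)=\langle a\rangle$ for $i<k$ via a coprime-action centralizer lemma, deduces $N_G(H_{ij})=Q_j\rtimes\langle a\rangle$ by the Frattini argument, counts the conjugates of the $H_{ij}=\langle a^{p^i}\rangle Q_j$ class by class, and then checks that every remaining subgroup is the unique subgroup of its order and hence a power subgroup. You instead compute $s(G)$ and $\ps(G)$ separately and subtract: the total count comes from classifying $H$ by $(H\cap Q, HQ/Q)$ and counting complements via Schur--Zassenhaus conjugacy and fixed points, and the power subgroups are identified explicitly as $G_s$ for $s<k$ and $\langle a^{p^s}\rangle\times\langle b^{q^t}\rangle$ for $s\ge k$. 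Both arguments rest on the same arithmetic fact --- that the order of $r$ modulo $q^j$ is $p^k$ for \emph{every} $1\le j\le m$, because the reduction maps are injective on $p$-parts --- but you isolate it explicitly, whereas in the paper it is buried in the normalizer computations; making it explicit is a genuine improvement in transparency. Your explicit determination of each $G^{p^sq^t}$ also gives more information than the paper's "unique subgroup of its order" argument (and, as a by-product, catches $Q=G^{p^n}$, which the paper's enumeration of power subgroups inadvertently omits --- a harmless off-by-one that does not affect the result). Two small points: the step $G^{p^sq^t}=\langle a^{p^s}\rangle\times\langle b^{q^t}\rangle$ for $s\ge k$ deserves one line verifying that $(b^ua^v)^{p^sq^t}$ lands in this subgroup (the $Q$-component is $u$ times a geometric series in $r^{\pm v}$, and your dichotomy shows it lies in $\langle b^{q^t}\rangle$ in both the $p^k\mid v$ and $p^k\nmid v$ cases); and the hypothesis needed for $k\ge1$ is really $r\not\equiv 1\pmod{q^m}$ rather than $r\ne 1$, though that imprecision is inherited from the statement itself.
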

	\begin{proof}
		For $0\le i < k$ and $0\le j < m$ let $P_i = \(a^{p^i}\)$, $Q_j = \(b^{q^{m-j}}\)$
		and $H_{ij} = P_iQ_j$. From the presentation of $G$ we have $Q_j\normal G$ and
		$Z(G)=\(a^{p^k}\)$. Furthermore, $P_0\in\Syl_p(G)$ and $G$ is the semidirect product 
		$Q\rtimes P_0$, where $Q\in\Syl_q(G)$. It is clear 
		that $P_0\subseteq N_G(P_i)$ and we claim that for $0\le i< k$, $N_G(P_i) = P_0$. If
		$x\in Q\cap N_G(P_i)$, then $[P_i,\(x\)]\subseteq P_i\cap Q = 1$; that is,
		$x\in C_G(P_i)$. It follows from \cite[Th. 5.2.4]{gorenstein:1968} that $x\in C_G(P_0)$ 
		and hence $x\in Z(G)\cap Q = 1$.  This establishes the claim.
		
		We also have $P_0\subseteq N_G(H_{ij})$ and it follows from the Frattini argument 
		that $N_G(H_{ij}) = Q_jN_G(P_i) = Q_j\rtimes P_0$. Therefore $H_{ij}$ has $q^{m-j}$ 
		conjugates in $G$ and since for $0\le i < k$ and $0\le j < m$ every subgroup of order 
		$p^{n-i}q^j$ is conjugate to $H_{ij}$ we have found 
		$k\sum_{j=0}^{m-1} q^{m-j} = kq(q^m-1)/(q-1)$ subgroups, all of which are nonpower 
		subgroups.  For $1\le i\le n$ the power subgroup $G^{p^{n-i}}$ is the unique subgroup 
		of order $p^iq^m$. For $k\le s\le n$ and $0\le j < m$, the power subgroup 
		$G^{p^sq^{m-j}}$ is the unique subgroup of order $p^{n-s}q^j$.  This accounts for 
		all subgroups of $G$. Therefore $\nps(G) = kq(q^m-1)/(q-1)$.
	\end{proof}
	
	%For $0\le i< k$ and $0\le j< m$, $H_{ij} = \(a^{p^{i}},b^{q^{m-j}}\)$ is a subgroup
	%of order $p^{n-i}q^j$. From the presentation of $G$ we have $Z(G)=\(a^{p^{k}}\)$. We claim that the 
	%Sylow $p$-subgroup $P_i = \(a^{p^{i}}\)$ of $H_{ij}$ is self-normalising in $H_{ij}$. Otherwise, $\(b^{q^{m-1}}\)$  normalise $\(a^{p^i}\)$. Thus $[\(a^{p^i}\), \(b^{q^{m-1}}\)]=1$. From \cite[Th. 5.2.4]{gorenstein:1968} we have $[\(a^{p^i}\),\(b\)]=1$ which contrary to $Z(G)=\(a^{p^{k}}\)$.
	%Thus  $P_i$ has $q^{j}$ 
	%conjugates in $H_{ij}$. Since the subgroup of order $q^j$ in $G$ is unique,  the number of subgroups of order $p^{n-i}q^{j-1}$ is $q^{j-1}$.  Thus there are $k\sum_{j=1}^{m} q^j = kq(q^m-1)/(q-1)$ 
	%subgroups of order $p^{n-i}q^j$ all of which are nonpower subgroups. For $k\leq s \leq n$, the unique subgroup of order $p^{n-s}q^{j}$ is $G^{p^{n-s}q^j}=\(a^{p^{s}},b^{q^{m-j}}\)$ which implies they are all power subgroups. Therefore $\nps(G) = kq(q^m-1)/(q-1)$.

	\begin{lemma}\label{lemma:B }
		For $n\ge 2$, we have
		\begin{enumerate}[\rm(i)]
			\setlength{\itemindent}{-0.5em}
			\item  $\nps(B^1_{n,p}) = p^2(2n-1) + p(n+1) + 2$, and
			\item  $\nps(B^2_{n,p}) = p^2(n-1) + p(n+1) + 2$.
		\end{enumerate}
	\end{lemma}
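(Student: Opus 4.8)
The plan is to compute $\nps$ from the identity $\nps(G)=s(G)-\ps(G)$, where $s(G)$ is the total number of subgroups, by sorting the subgroups of $G$ according to whether or not they contain the commutator subgroup. Both groups $G=B^1_{n,p}$ and $G=B^2_{n,p}$ have order $p^{n+2}$, nilpotency class $2$, are generated by two elements, and have $G'\cong C_p$ central with $G/G'\cong C_p\times C_{p^n}$; only the embedding of $G'$ differs. First I would record these structural facts, note $Z(G)=\Phi(G)\cong C_{p^{n-1}}\times C_p$, and, for $p$ odd, invoke regularity (class $2<p$) to describe the chain of power subgroups $G^{p^s}$ explicitly. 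For $B^1_{n,p}$ they are $G$ together with $\langle b^{p^s}\rangle$ for $1\le s\le n$, so $\ps=n+1$ and exactly one of them (namely $G$) contains $G'$; for $B^2_{n,p}$ they are $G$, $G^{p}=\langle a^{p}\rangle G'$ and $\langle a^{p^s}\rangle$ for $2\le s\le n$, so again $\ps=n+1$, but now two of them contain $G'$.

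Next I would count the two families of subgroups. Subgroups containing $G'$ are in bijection with subgroups of $G/G'\cong C_p\times C_{p^n}$, and a standard count gives $np+n+2$ of them. A subgroup $H$ with $G'\not\subseteq H$ satisfies $H\cap G'=1$, hence is abelian and embeds into $G/G'$; such an $H$ is precisely a complement of $G'=\langle c\rangle$ in the preimage $\widetilde H=\pi^{-1}(\overline H)$ of its image $\overline H\le G/G'$. The number of such subgroups projecting to a fixed $\overline H$ is $p^{d(\overline H)}$, with $d$ the minimal number of generators, when $\widetilde H$ is abelian and $c\notin\widetilde H^{\,p}$, and $0$ otherwise: here $\widetilde H$ is abelian exactly when the commutator pairing $[x,y]=c^{\langle x,y\rangle}$ vanishes on $\overline H$, i.e.\ when the image of $\overline H$ in the two-dimensional symplectic quotient of $G/G'$ has dimension at most $1$. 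Summing $p^{d(\overline H)}$ over the admissible $\overline H$, organised by whether that image is trivial or one of the $p+1$ lines and by the type ($C_{p^n}$ or $C_p\times C_{p^{n-1}}$) of the corresponding preimage, yields the count of the non-$G'$ subgroups.

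This is where the two groups diverge, and it is the crux of the argument. In $B^1_{n,p}$ one has $G^{p}=\langle b^{p}\rangle$ with $c\notin G^{p}$, so the obstruction $c\in\widetilde H^{\,p}$ never arises and every abelian preimage contributes its full $p^{d}$; the sum works out to $p^2(2n-1)+p+1$. In $B^2_{n,p}$, by contrast, $c=b^{p}\in G^{p}$, so an abelian preimage $\widetilde H$ splits over $\langle c\rangle$ only when $\overline H\cap\Omega_1(G/G')$ avoids the $\overline b$-direction; in particular every rank-$2$ preimage fails to split, which deletes exactly the terms responsible for the larger coefficient and leaves $p^2(n-1)+p+1$. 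Subtracting the power subgroups identified above from $s(G)$ then gives $\nps(B^1_{n,p})=p^2(2n-1)+p(n+1)+2$ and $\nps(B^2_{n,p})=p^2(n-1)+p(n+1)+2$. For $p$ odd the second formula also follows at once from Lemma \ref{lemma:Gpp}, since $B^2_{n,p}=G^{(p+1)}_{p,n;p,2}$ gives $\nps(B^2_{n,p})=\nps(C_{p^n}\times C_{p^2})$, which Lemma \ref{lemma:list}(ix) evaluates to the stated expression; this provides an independent check.

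The main obstacle is twofold. The delicate part of the odd-$p$ argument is the complement count for $B^2$, where one must track precisely which abelian preimages acquire $c$ as a $p$-th power, since the entire discrepancy between the two formulas is concentrated in this bookkeeping. The genuinely separate case is $p=2$, where regularity fails ($(xy)^2=x^2y^2[y,x]$) and both the description of $G^{2}$ and the splitting obstruction must be redone directly; here it is convenient that $B^2_{n,2}=G_{n,4}$ is metacyclic, so its subgroup lattice is accessible, whereas $B^1_{n,2}$ is handled by an explicit enumeration along the same lines as the odd case. Finally I would verify the boundary behaviour as a numerical check, for instance that the $B^1$ formula specialises at $n=1$ to $\nps(M(p))=p^2+2p+2$ of Lemma \ref{lemma:list}(v).
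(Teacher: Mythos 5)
Your proposal is correct in substance but follows a genuinely different route from the paper. The paper stratifies the subgroups by exponent: it identifies $\Omega_{n-1}(B^i_{n,p})$ as an explicit abelian group ($C_p\times C_p\times C_{p^{n-1}}$ for $i=1$, $C_{p^2}\times C_{p^{n-1}}$ for $i=2$), quotes Corollary 2.2 of T\u{a}rn\u{a}uceanu--T\'oth for the number of subgroups of exponent at most $p^{n-1}$, adds the $p^2+p+1$ subgroups of exponent $p^n$, and subtracts the $n+1$ power subgroups. You instead stratify by containment of $G'$: subgroups containing $G'$ are counted via $s(C_p\times C_{p^n})=np+n+2$, and subgroups avoiding $G'$ are counted as complements of $G'=\(c\)$ in preimages $\widetilde H$, using the correct splitting criterion ($\widetilde H$ abelian and $c\notin\widetilde H^{\,p}$, giving $p^{d(\overline H)}$ complements) -- I have checked that your two sums $p^2(2n-1)+p+1$ and $p^2(n-1)+p+1$ are right and reproduce the paper's totals. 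Your approach is more self-contained (no external subgroup-counting formula) at the cost of heavier bookkeeping; the paper's is shorter but leans on the cited corollary. Two small caveats. First, your summary sentence that deleting the rank-$2$ contributions ``leaves $p^2(n-1)+p+1$'' undersells the accounting: the $n-1$ proper rank-$2$ subgroups only remove $(n-1)p^2$ from the $B^1$ total of $(2n-1)p^2+p+1$, and the remaining $p^2$ is lost because $p$ of the $p+1$ order-$p$ cyclic subgroups of $G/G'$ (namely $\(\overline b\)$ and $\(\overline a^{p^{n-1}}\overline b^i\)$ for $i\ne 0$) also have non-split preimages; your stated criterion does capture this, but the sum must be carried out in full. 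Second, the ``independent check'' against Lemma \ref{lemma:list}(ix) should be treated with care, since that displayed formula does not evaluate correctly on small cases as printed; the direct verification $s(C_{p^2}\times C_{p^2})=p^2+3p+5$, $\ps=3$ is a safer cross-check. Your observation that $p=2$ needs separate treatment (regularity fails, $G^2$ changes, e.g.\ $c\in G^2$ for $B^1_{n,2}$) is a genuine point that the paper's proof glosses over as well.
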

	
	\begin{proof}
		There are $n+1$ power subgroups of $B_{n,p}^{1}$ and $B_{n,p}^{2}$ . We count all the subgroups of 
		them by considering their exponents. 
		
		Notice that $\Omega_{n-1}(B^1_{n,p}) =
		\(a\)\times\(c\)\times\(b^p\)\simeq C_p\times C_p\times C_{p^{n-1}}$.  By Corollary 2.2 
		in \cite{tarnauceanu-toth:2017}, the number of subgroups of exponent at most $p^{n-1}$ 
		is $s(C_p\times C_p\times C_{p^{n-1}}) = 2(n-1)p^2+np+n+2$. Furthermore, there are 
		$p^2+p+1$ subgroups of exponent $p^n$. Thus $\nps(B^1_{n,p}) = p^2(2n-1) + p(n+1) + 2$. 
		
		Similar to $B^1_{n,p}$, $\Omega_{n-1}(B^2_{n,p}) =
		\(a\)\times\(b^p\)\simeq C_{p^{2}}\times C_{p^{n-1}}$. From Corollary 2.2 
		in \cite{tarnauceanu-toth:2017}, the number of subgroups of exponent at most $p^{n-1}$ 
		is $s(C_{p^2}\times C_{p^{n-1}}) = (n-2)p^2+pn+n+2$. And there are also $p^2+p+1$ subgroups of exponent $p^n$. Thus $\nps(B^2_{n,p}) = p^2(n-1) + p(n+1) + 2$.

	\end{proof}
	
	\begin{lemma}\label{lemma:Gn3C3}
		Let $G = G_{m,3}\times \underbrace{C_3\times\dots\times C_3}_n$. For
		$n \ge 1$ we have $\nps(G)\ge 4m+6$ and equality holds when $n = 1$.
	\end{lemma}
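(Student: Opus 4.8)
The plan is first to dispose of the case $n\ge 2$ by a quotient argument, and then to establish the exact value $\nps(G_{m,3}\times C_3)=4m+6$, which simultaneously yields the $n=1$ equality. For $n\ge 2$, note that $n-1$ of the direct factors $C_3$ form a normal subgroup $N$ of $G$ with $G/N\simeq G_{m,3}\times C_3$; Lemma \ref{lemma:quo} then gives $\nps(G)\ge\nps(G/N)=\nps(G_{m,3}\times C_3)$, so everything reduces to the single computation for $n=1$. Throughout I write $G=G_{m,3}\times C_3=(C_3\times C_3)\rtimes C_{2^m}$, where $P=\langle b\rangle\times\langle c\rangle\simeq C_3\times C_3$ is the normal Sylow $3$-subgroup and a generator $a$ of the cyclic Sylow $2$-subgroup acts on $P$ by $b\mapsto b^{-1}$, $c\mapsto c$; thus $a^2$ is central and the action of $\langle a\rangle$ on $P$ factors through $C_2$, with $(-1)$-eigenspace $\langle b\rangle$ and $(+1)$-eigenspace $\langle c\rangle$.

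Next I would count the power subgroups. Since the exponent is $3\cdot 2^m$, we have $G^t=G$ whenever $\gcd(t,6)=1$, so $G^t$ depends only on the $2$- and $3$-parts of $t$, and a short computation of squares and cubes exhibits exactly two chains. Squaring fixes $P$ and contracts the $2$-part, giving $G^{2^\alpha}=\langle a^{2^\alpha}\rangle P$ (a direct product once $\alpha\ge 1$) of order $9\cdot 2^{m-\alpha}$ for $0\le\alpha\le m$. Cubing kills $c$ but returns the $\langle b\rangle$-part, so $G^{3}=G_{m,3}$ and then $G^{3\cdot2^\alpha}=\langle a^{2^\alpha}\rangle$ of order $2^{m-\alpha}$ for $1\le\alpha\le m$; moreover $G^{3^\beta}=G_{m,3}$ for all $\beta\ge 1$, so no further subgroups arise. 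These two chains are disjoint (the first consists of the subgroups containing $P$, the second of $G_{m,3}$ together with the cyclic $2$-subgroups), so $\ps(G)=(m+1)+(m+1)=2m+2$.

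Finally I would count all subgroups of $G$ by Goursat's lemma applied to $G=G_{m,3}\times C_3$ (note Lemma \ref{lemma:prod} does not apply, as the orders are not coprime). The subgroups split into the product subgroups $A_1\times B_1$ with $A_1\le G_{m,3}$ and $B_1\in\{1,C_3\}$, contributing $2\,s(G_{m,3})$, and the diagonal subgroups determined by a section $A_1/A_2\simeq C_3$ of $G_{m,3}$ together with one of the two isomorphisms onto $C_3$, contributing $2N$, where $N$ is the number of pairs $A_2\trianglelefteq A_1\le G_{m,3}$ with $A_1/A_2\simeq C_3$. A direct count of the subgroups of $G_{m,3}$ (those containing the normal $\langle b\rangle$, giving $m+1$, together with the $2$-subgroups, namely the $m$ cyclic subgroups of the central $\langle a^2\rangle$ and the three conjugate Sylow $2$-subgroups) yields $s(G_{m,3})=2m+4$. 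For $N$, an index-$3$ section forces $A_1\supseteq\langle b\rangle$ and $A_2$ to be a Sylow $2$-subgroup of $A_1$ that is normal in $A_1$; this occurs for exactly one $A_2$ when $A_1$ is one of the $m$ cyclic subgroups $\langle b\rangle\times\langle a^{2^d}\rangle$ $(1\le d\le m)$, whereas $A_1=G_{m,3}$ contributes nothing because its three Sylow $2$-subgroups are not normal. Thus $N=m$, so $s(G)=2(2m+4)+2m=6m+8$, and therefore $\nps(G)=s(G)-\ps(G)=(6m+8)-(2m+2)=4m+6$. The main obstacle is this last subgroup count: since $G$ is not a coprime direct product the diagonal subgroups must be enumerated by hand, and the key point making the bookkeeping come out to $4m+6$ is precisely that $G_{m,3}$ has no normal subgroup of index $3$.
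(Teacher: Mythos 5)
Your proposal is correct, and it is considerably more explicit than the paper's own proof, which simply asserts that the $n=1$ value $4m+6$ follows from a calculation ``similar to'' that of $\nps(G_{m,3})$ in \cite{zheng-etal:2023} and handles $n\ge 2$ by observing that extra $C_3$ factors increase $s(G)$ without increasing $\ps(G)$. Your reduction of $n\ge 2$ to $n=1$ via Lemma \ref{lemma:quo} is a cleaner way to get the same inequality. For $n=1$ the underlying strategy is the same (compute $s(G)$ and $\ps(G)$ and subtract), but the bookkeeping is organized differently: you enumerate the power subgroups as the two chains $G^{2^\alpha}=\langle a^{2^\alpha}\rangle P$ and $G^3=G_{m,3}\supseteq G^{3\cdot 2^\alpha}=\langle a^{2^\alpha}\rangle$, giving $\ps(G)=2m+2$, and you count all subgroups by Goursat's lemma as $2s(G_{m,3})+2N=2(2m+4)+2m=6m+8$, the key structural input being that $G_{m,3}$ has no normal subgroup of index $3$, so only the $m$ cyclic overgroups of $\langle b\rangle$ contribute diagonal subgroups. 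All of these counts check out (e.g.\ for $m=1$ one gets $s(\Sym(3)\times C_3)=14$, $\ps=4$, $\nps=10$, consistent with Lemma \ref{lemma:dc3} and with the $k=10$ entry of Theorem \ref{thm:main}), so your argument is a complete, self-contained proof where the paper offers only an analogy; the price is the hand enumeration of diagonal subgroups, which Goursat makes routine here because the second factor is just $C_3$.
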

	\begin{proof}
		Similar to the calculation of $\nps(G_{n,3})$ in \cite{zheng-etal:2023}, we have 
		$\nps(G_{m,3}\times C_3) = 4m+6$. And as $n$ increases, the number of subgroups 
		will increase but the number of power subgroups does not increase. Thus 
		$\nps(G)\ge 4m+6$. This completes the proof.
	\end{proof}
	
	\begin{lemma}\label{lemma:hamilton}
		Let $G = Q_8\times \underbrace{C_2\times\dots\times C_2}_n$. For
		$n \ge 1$ we have $\nps(G)\ge 16$ and equality holds when $n = 1$.
	\end{lemma}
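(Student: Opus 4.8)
The plan is first to pin down the power subgroups, which is easy and uniform in $n$, and then to reduce the statement to a single subgroup count for the base case together with a monotonicity argument.

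Since $Q_8$ has exponent $4$ and every element of $C_2$ squares to the identity, the group $G = Q_8\times C_2^{\,n}$ has exponent $4$ and $G^2 = \langle z\rangle$, where $z$ is the unique involution of $Q_8$. For odd $m$ we get $G^m = G$; for $m\equiv 2\pmod 4$ we get $G^m = G^2 = \langle z\rangle$; and for $m\equiv 0\pmod 4$ we get $G^m = 1$. Hence there are exactly three power subgroups, namely $G$, $\langle z\rangle$ and $1$, independently of $n$. Thus $\ps(G) = 3$ and the whole problem becomes the computation of $s(G)$, the total number of subgroups, since $\nps(G) = s(G) - 3$.

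For the base case $n = 1$ I would count the subgroups of $Q_8\times C_2$ by order. Writing $C_2 = \langle t\rangle$, the involutions are $(z,1)$, $(1,t)$ and $(z,t)$, giving three subgroups of order $2$, and these three together with the identity form the unique Klein four-subgroup. For order $4$ I would count the twelve elements of order $4$ (an order-$4$ element of $Q_8$ paired with either element of $C_2$), which yield $12/\phi(4) = 6$ cyclic subgroups $C_4$; so there are $6+1 = 7$ subgroups of order $4$. For the maximal subgroups I would use that $\Phi(G) = G^2[G,G] = \langle z\rangle$, so $G/\Phi(G)\cong C_2^3$ and $G$ has $(2^3-1)/(2-1) = 7$ subgroups of order $8$. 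Adding the trivial and full subgroups gives $s(Q_8\times C_2) = 1 + 3 + 7 + 7 + 1 = 19$, whence $\nps(Q_8\times C_2) = 19 - 3 = 16$, the claimed equality.

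Finally, for $n\ge 1$ the inclusion $Q_8\times C_2^{\,n}\hookrightarrow Q_8\times C_2^{\,n+1}$ (adjoining one more $C_2$ factor) shows that every subgroup of $Q_8\times C_2^{\,n}$ persists in $Q_8\times C_2^{\,n+1}$, while the extra factor contributes genuinely new subgroups (for instance the last $C_2$ itself); hence $s$ is strictly increasing in $n$. Since $\ps(G) = 3$ for every $n$, the quantity $\nps(G) = s(G) - 3$ is strictly increasing, so $\nps(Q_8\times C_2^{\,n})\ge 16$ for all $n\ge 1$, with equality exactly at $n = 1$. The main obstacle is the order-$4$ count for the base case: one must carefully separate the six cyclic $C_4$'s coming from elements of order $4$ from the single Klein four-subgroup, since any miscount there propagates directly into the final value. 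The monotonicity step is routine once a single new subgroup of the larger group is exhibited.
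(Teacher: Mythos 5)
Your proposal is correct and follows essentially the same route as the paper: identify the three power subgroups $G$, $G^2=\langle z\rangle$ and $1$ (independent of $n$), verify $s(Q_8\times C_2)=19$ so that $\nps(Q_8\times C_2)=16$, and conclude by monotonicity of the subgroup count. The only difference is that you carry out the count of $19$ subgroups explicitly by order, which the paper merely asserts.
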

	\begin{proof}
		For all $n\ge 1$ the exponent of $G$ is $4$ and the only proper non-trivial power 
		subgroup is $G^2$ of order $2$. The group $Q_8\times C_2$ has $19$ subgroups, 
		consequently $\nps(Q_8\times C_2) = 16$ and $\nps(G)\ge 16$ for $n\ge 1$.
	\end{proof}
	
	\begin{lemma}\label{lemma:S4}
		By direct calculation we have $\nps(\Sym(4)) = 26$, $\nps(\SL(2,3)) = 11$ and 
		$\nps(C_3\rtimes Q_8) = 13$.
	\end{lemma}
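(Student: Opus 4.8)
The plan is to verify each value from the identity $\nps(G) = s(G) - \ps(G)$, where $s(G)$ denotes the total number of subgroups of $G$. For each of the three groups I would first enumerate all subgroups, organised by order and using Sylow's theorems together with the explicit structure of the group, and then determine the power subgroups. This second task is the short one: every $G^m$ is characteristic, so the power subgroups all lie in the small set of normal subgroups of $G$, and it suffices to compute $G^m = \langle g^m\mid g\in G\rangle$ for the divisors $m$ of the exponent. The main obstacle is therefore not conceptual but the completeness and accuracy of the subgroup enumeration; the errors most likely to creep in are in counting the subgroups of order $4$ and in correctly locating the subgroups of index $2$.

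For $\Sym(4)$ (order $24$, exponent $12$) the enumeration gives $30$ subgroups: the trivial one; nine of order $2$ (six generated by transpositions and three by double transpositions); four of order $3$; seven of order $4$ (three cyclic and four Klein, exactly one of the latter normal); four copies of $\Sym(3)$; three dihedral Sylow $2$-subgroups; $\Alt(4)$; and $\Sym(4)$ itself. Evaluating powers shows $G^2 = \Alt(4)$ and that $G^6$ is the normal Klein four-group $V$, while $G^3 = G$ and $G^{12} = 1$; hence the power subgroups are $\Sym(4)\supset\Alt(4)\supset V\supset 1$, giving $\ps(\Sym(4)) = 4$ and $\nps(\Sym(4)) = 30 - 4 = 26$.

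For $\SL(2,3)$ I would use that it is the binary tetrahedral group: its Sylow $2$-subgroup $Q_8$ is normal, its centre $Z$ has order $2$ and is the unique involution, and its abelianisation is $C_3$. The last fact rules out any subgroup of index $2$, and in particular any copy of $\Alt(4)$. Counting by order then yields $15$ subgroups (one each of orders $1$, $2$, $8$, $24$, four of order $3$, three of order $4$, four of order $6$, and none of order $12$). The only normal subgroups are $1$, $Z$, $Q_8$ and $G$, and all four occur as power subgroups, since $G^6 = Z$, $G^3 = Q_8$ and $G^2 = G$; thus $\ps(\SL(2,3)) = 4$ and $\nps(\SL(2,3)) = 15 - 4 = 11$.

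For $C_3\rtimes Q_8$ I would first identify it with the dicyclic group of order $24$: writing $t$ for a generator of its cyclic index-$2$ subgroup $C_{12}$ and $j$ for an element outside it, one has $t^{12} = 1$, $j^2 = t^6$ and $jtj^{-1} = t^{-1}$, so that $t^6$ is the unique involution and there are no Klein four-subgroups. Counting by order gives $18$ subgroups: one each of orders $1$, $2$, $3$, $6$, $24$; seven (all cyclic) of order $4$; three Sylow $2$-subgroups isomorphic to $Q_8$; and three of order $12$, namely $\langle t\rangle\simeq C_{12}$ together with two copies of the dicyclic group of order $12$. The power subgroups are $G$, $\langle t^2\rangle$, $\langle t^4\rangle$, $\langle t^6\rangle$ and $1$, of orders $24$, $6$, $3$, $2$, $1$ respectively, so $\ps(C_3\rtimes Q_8) = 5$ and $\nps(C_3\rtimes Q_8) = 18 - 5 = 13$.
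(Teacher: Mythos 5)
Your proposal is correct: all three subgroup enumerations (30 for $\Sym(4)$, 15 for $\SL(2,3)$, 18 for the dicyclic group $C_3\rtimes Q_8$) and the corresponding power-subgroup counts (4, 4 and 5) check out, yielding $26$, $11$ and $13$. The paper offers no argument beyond the phrase ``by direct calculation,'' and your proof is exactly that calculation carried out in full.
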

	
	\begin{lemma}[{\cite[Lemma 3.3]{zheng-etal:2023}}]\label{lemma:dc2}
		Let $G = D_{2p}\times \underbrace{C_2\times\dots\times C_2}_n$. For
		$n \ge 1$ and a prime $p > 2$, we have $\nps(G)\ge 3p+4$ and equality holds 
		when $n = 1$.
	\end{lemma}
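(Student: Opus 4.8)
The plan is to split the computation into the number of power subgroups, which turns out to be independent of $n$, and the total number of subgroups in the base case $n=1$, afterwards handling general $n$ by monotonicity.

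First I would pin down the power subgroups of $G = D_{2p}\times C_2^n$. The exponent of $G$ is $2p$, and since $g^{2p}=1$ for all $g$ one checks that $G^m = G^{\gcd(m,2p)}$, so it suffices to compute $G^d$ for the divisors $d\in\{1,2,p,2p\}$ of $2p$. Writing $D_{2p}=\langle r,s\rangle$ and $C_2^n = \langle t_1,\dots,t_n\rangle$, squaring kills every involution and squares the rotation, so $G^2 = \langle r\rangle\cong C_p$; because $p$ is odd the $p$-th power of each reflection $sr^j$ is itself and the reflections already generate $D_{2p}$, so $G^p = G$; and $G^{2p}=1$. Hence the power subgroups are exactly $1$, $C_p$ and $G$, giving $\ps(G)=3$ for every $n\ge1$ and reducing the statement to the count of $s(G)$, since $\nps(G)=s(G)-3$.

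For the inequality for general $n$ I would use Lemma~\ref{lemma:quo}. Taking $N$ to be one of the central direct factors $\langle t_n\rangle$ gives $G/N\cong D_{2p}\times C_2^{\,n-1}$, whence $\nps(G)\ge\nps(G/N)$; iterating down to $n=1$ reduces the bound $\nps(G)\ge 3p+4$ to the single base case.

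The main work, and the step I expect to be the principal obstacle, is the exact count $s(D_{2p}\times C_2)=3p+7$, which I would organise by the order of the subgroup. Since $C_p=\langle r\rangle$ is the unique, hence normal, Sylow $p$-subgroup, every subgroup is either a $2$-group or contains $C_p$. The subgroups containing $C_p$ correspond to the subgroups of $G/C_p\cong C_2\times C_2$, giving five of them: $C_p$ itself, the three subgroups of order $2p$ (two copies of $D_{2p}$, from $\langle r,s\rangle$ and $\langle r,st\rangle$, and one $C_{2p}=\langle r,t\rangle$), and $G$. For the $2$-subgroups the delicate point is the involution structure: the involutions are the central $t$, the $p$ reflections $sr^j$, and the $p$ elements $sr^jt$, yielding $2p+1$ subgroups of order $2$; and since the product of two distinct reflections is a rotation of order $p$, the only Klein four-subgroups are the $p$ Sylow $2$-subgroups $\langle sr^j,t\rangle$. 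Adding the contributions $1+(2p+1)+p+1+3+1$ gives $s=3p+7$, hence $\nps(D_{2p}\times C_2)=3p+4$; together with the monotonicity step this establishes both the equality at $n=1$ and the inequality for all $n\ge1$.
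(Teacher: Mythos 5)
Your proof is correct. The paper does not actually prove this lemma --- it is imported verbatim from \cite[Lemma 3.3]{zheng-etal:2023} --- so there is no in-paper argument to compare against, but your self-contained computation is sound: the power subgroups of $D_{2p}\times C_2^n$ are exactly $1$, $\(r\)\simeq C_p$ and $G$ for every $n\ge 1$ (the reduction $G^m=G^{\gcd(m,2p)}$ is valid since the exponent is $2p$), the subgroup count $s(D_{2p}\times C_2)=3p+7$ is right, and the passage to general $n$ via Lemma \ref{lemma:quo} applied repeatedly to the central factors $\(t_i\)$ is exactly the intended use of that lemma. One point worth tightening: in ruling out Klein four-subgroups other than the $\(sr^j,t\)$, you only cite the fact that two distinct reflections of $D_{2p}$ multiply to a nontrivial rotation; you should also exclude a subgroup containing $sr^i$ and $sr^jt$ with $i\ne j$, whose product $r^{j-i}t$ has order $2p$. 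This is a one-line addition and does not affect the count.
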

	
	\begin{lemma}[{\cite[Lemma 3.4]{zheng-etal:2023}}]\label{lemma:dc3}
		Let $X_{n,p} = D_{2p}\times \underbrace{C_3\times\dots\times C_3}_n$.
		For $n \ge 1$ and a prime $p > 3$, we have $\nps(X_{n,p})=(p+3)s(C_{3}^{n})-6\geq10$ 
		where $C_3^n = \underbrace{C_3\times\dots\times C_3}_n$. 
		For $n > 2$, $\nps(X_{n,3}) > \nps(X_{2,3}) = 48$ and $\nps(X_{1,3}) = 10$.
	\end{lemma}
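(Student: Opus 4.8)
The plan is to split into two cases according to whether $\gcd(2p,3^n)=1$: the coprime case $p>3$, where the product formula applies, and the case $p=3$, where it does not.

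For $p>3$ we have $\gcd(2p,3^n)=1$, so Lemma~\ref{lemma:prod} gives $\nps(X_{n,p})=\nps(D_{2p})\,s(C_3^n)+\ps(D_{2p})\,\nps(C_3^n)$ with $A=D_{2p}$ and $B=C_3^n$. First I would record the invariants of $D_{2p}$: its subgroups are the trivial group, the rotation subgroup $C_p$, the $p$ reflection subgroups and $D_{2p}$, so $s(D_{2p})=p+3$; its only power subgroups are $1$, $C_p=D_{2p}^2$ and $D_{2p}=D_{2p}^p$, so $\ps(D_{2p})=3$ and $\nps(D_{2p})=p$ (consistent with Lemma~\ref{lemma:list}(vi)). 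Since squaring is bijective on $C_3^n$, its only power subgroups are $1$ and $C_3^n$, whence $\nps(C_3^n)=s(C_3^n)-2$. Substituting gives $\nps(X_{n,p})=p\,s(C_3^n)+3\bigl(s(C_3^n)-2\bigr)=(p+3)s(C_3^n)-6$. The bound then follows by minimising: for $n\ge1$ one has $s(C_3^n)\ge s(C_3)=2$, and for a prime $p>3$ one has $p+3\ge8$, so $\nps(X_{n,p})\ge 8\cdot2-6=10$, with equality at $p=5$, $n=1$.

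For $p=3$ the product formula is unavailable, and the key structural observation is that $X_{n,3}=C_3^{\,n+1}\rtimes\(b\)$, where the normal Sylow $3$-subgroup is $V=\(a\)\oplus W$ with $W\simeq C_3^n$, and $b$ inverts $a$ and centralises $W$. I would first show $\ps(X_{n,3})=4$ for every $n\ge1$, independently of $n$. Computing $m$th powers: squaring is bijective on $V$ and kills the involutions, so $G^2=V$; writing a reflection-type element as $vb$ with $v=v_1a+v_W$, one finds $(vb)^2=-v_W\in W$ and $(vb)^3=a^{v_1}b$, so the cubes are exactly $\{b,ab,a^2b\}$ and $G^3=\(a,b\)\simeq\Sym(3)$, while $G^6=1$; every other $G^m$ coincides with one of $G$, $V$, $\Sym(3)$, $1$. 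These four have orders $2\cdot3^{\,n+1}$, $3^{\,n+1}$, $6$, $1$, hence are distinct for all $n\ge1$, giving $\nps(X_{n,3})=s(X_{n,3})-4$ with the correction term constant in $n$.

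Because $X_{n,3}=X_{n-1,3}\times C_3$, the disjoint families $\{H\times1\}$ and $\{H\times C_3\}$ (over $H\le X_{n-1,3}$) exhibit $2\,s(X_{n-1,3})$ distinct subgroups, so $s$ is strictly increasing in $n$; combined with $\ps\equiv4$ this yields $\nps(X_{n,3})>\nps(X_{2,3})$ for all $n>2$. It then remains to evaluate the two base cases. For $n=1$ the order-$18$ group has $1+3+4+4+1+1=14$ subgroups of orders $1,2,3,6,9,18$, so $\nps(X_{1,3})=14-4=10$. The genuinely laborious point, and the main obstacle, is $\nps(X_{2,3})=48$, i.e. $s(X_{2,3})=52$ for the order-$54$ group.

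For that count I would exploit the eigenspace decomposition of $b=\mathrm{diag}(-1,1,1)$ on $V\simeq\mathbb{F}_3^{\,3}$, whose eigenspaces are $\(a\)$ (eigenvalue $-1$) and $W$ (eigenvalue $+1$). Subgroups split into those contained in $V$, which are exactly the $s(C_3^3)=1+13+13+1=28$ subspaces, and those meeting the coset $Vb$. For the latter, $U=H\cap V$ must be $b$-invariant, hence $U=(U\cap\(a\))\oplus(U\cap W)$, and the subgroups $H$ with $H\cap V=U$ correspond to cosets $Ut\subseteq Vb$ with $t^2\in U$; since $t^2=-v_W$ the condition reduces to $v_W\in U\cap W$. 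Running through the $b$-invariant subspaces $U$ of each dimension gives $3$ subgroups of order $2$, $1+12=13$ of order $6$, $3+4=7$ of order $18$ and $1$ of order $54$, that is $24$ subgroups meeting $Vb$. Hence $s(X_{2,3})=28+24=52$ and $\nps(X_{2,3})=48$, completing the proof. (The same eigenspace bookkeeping reproduces the $n=1$ count as a consistency check.)
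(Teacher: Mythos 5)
Your proposal is correct, but note that the paper offers no proof of this statement at all: it is imported verbatim as \cite[Lemma 3.4]{zheng-etal:2023}, so there is nothing here to compare against line by line. What you supply is a complete self-contained proof, and every numerical claim checks out. The coprime half ($p>3$) is the inevitable route — apply Lemma~\ref{lemma:prod} with $\nps(D_{2p})=p$, $\ps(D_{2p})=3$, $\nps(C_3^n)=s(C_3^n)-2$ — and is surely how the cited source proceeds as well. The $p=3$ half is where a genuine argument is needed, since the product formula fails; your observations that $\ps(X_{n,3})=4$ independently of $n$ (with power subgroups $G\supset V\supset\Sym(3)\supset 1$, verified by the computation $(vb)^2=-v_W$ and $(vb)^3=a^{v_1}b$) and that $s(X_{n,3})\ge 2\,s(X_{n-1,3})$ immediately give the monotonicity claim, reducing everything to the two base counts. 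Your eigenspace bookkeeping for $s(X_{2,3})=28+24=52$ is correct: the $b$-invariant subspaces $U=U_-\oplus U_+$ of $\mathbb{F}_3^{\,3}$ contribute $3/|U_-|$ cosets of $U$ in $Vb$ consisting of elements squaring into $U$, which yields exactly the tallies $3$, $13$, $7$, $1$ you list, hence $\nps(X_{2,3})=48$ and likewise $\nps(X_{1,3})=14-4=10$. This is a legitimate, checkable replacement for the external citation.
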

	
	% ====================
	\section{Proof of Theorem \ref{thm:main}}
	
	\begin{lemma}\label{lemma:A}
		Let $P$ be a Sylow $p$-subgroup of a group $G$ such that $P\ne N_G(P)\ne G$. 
		Then $10\le \nps(G)\le 13$ if and only if for some $n\ge 1$ one of the following holds
		\begin{enumerate}[\quad\rm 1)]
			\item $\nps(G) = 10$, $p = 2$ and $G$ is isomorphic to $\Sym(3)\times C_3$ or
			to $G_{n,5}\times C_r$ where $r\ne 2,5$ is a prime;
			\item $\nps(G) = 11$, $p = 3$ and $G$ is isomorphic to $\SL(2,3)$;
			\item $\nps(G) = 12$, $p = 2$ and $G$ is isomorphic to $G_{n,3}\times C_{qr}$ where 
			$q$ and $r$ are primes such that $3 < q < r$.
		\end{enumerate}
		In all cases, if\/ $Q\in\Syl_q(G)$ and $q\ne p$, then $Q\normal G$.
	\end{lemma}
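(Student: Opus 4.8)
The plan is to prove both implications; the backward direction is routine verification. For the two coprime families I apply Lemma~\ref{lemma:prod}: writing $G=A\times C_m$ with $A\in\{G_{n,5},G_{n,3}\}$ and $\gcd(|A|,m)=1$, and using $\nps(C_m)=0$, I get $\nps(G)=\nps(A)\,s(C_m)$, which gives $\nps(G_{n,5}\times C_r)=5\cdot 2=10$ and $\nps(G_{n,3}\times C_{qr})=3\cdot 4=12$ from the values $\nps(G_{n,5})=5$ and $\nps(G_{n,3})=3$ of Theorems~\ref{thm:upto9} and~\ref{thm:upto4}. The remaining two groups are handled directly: $\nps(\SL(2,3))=11$ is Lemma~\ref{lemma:S4} and $\nps(\Sym(3)\times C_3)=10$ by a direct subgroup count. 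In each case $P$ is cyclic, $N_G(P)$ is the product of $P$ with its centralized part, so $P\ne N_G(P)\ne G$, and every Sylow $q$-subgroup with $q\ne p$ lies in the normal $p$-complement and is therefore normal in $G$.

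For the forward direction I first read off numerical constraints. As $N_G(P)\ne G$, the subgroup $P$ and its $n_p=[G:N_G(P)]$ conjugates are nonpower; moreover $N:=N_G(P)$ satisfies $N_G(N)=N$ (because $P$ is characteristic in $N$) and is not normal, so its conjugates also number $n_p$ and, having order $>|P|$, are distinct from the conjugates of $P$. Hence $\nps(G)\ge 2n_p$, so $n_p\le 6$; with $n_p\equiv 1\pmod p$ and $n_p>1$ this forces $p\in\{2,3,5\}$ and $n_2\in\{3,5\}$, $n_3=4$, $n_5=6$.

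The second step is to show $G=M\rtimes P$ with $M=O_{p'}(G)$ nilpotent and $P$ cyclic. I rule out a noncyclic $P$ by counting the conjugates of its proper subgroups, together with the cyclic nonpower $p$-subgroups of Corollary~\ref{cor:power} when $p$ is odd; this drives $\nps(G)$ above $13$. A cyclic Sylow subgroup then satisfies $N_G(P)=C_G(P)$ automatically for $p=2$ (its automorphism group is a $2$-group), and Burnside's normal $p$-complement theorem applies; for $p=3$ the only obstruction is an involution inverting $P$, which I rule out because it entails a non-normal $C_3$ with dihedral normalizer and $n_3=4$, i.e. a $\Sym(4)$ section, excluded by $\nps(\Sym(4))=26$. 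Thus $G=M\rtimes P$ with $M$ nilpotent, $N_G(P)=C_M(P)\times P$, the hypothesis $P\ne N_G(P)\ne G$ becomes $1\ne C_M(P)\ne M$, and $n_p=[M:C_M(P)]$; this also gives the final assertion, since each Sylow $q$-subgroup ($q\ne p$) is characteristic in $M\normal G$.

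Finally, decomposing $M=\prod_q M_q$ yields $n_p=\prod_q[M_q:C_{M_q}(P)]$, where each factor is a power of $q$ and is $\equiv 1\pmod p$ because the $P$-orbits on $M_q$ have $p$-power length. This eliminates $p=5$, since $n_5=6$ is neither a prime power nor a product $2\cdot3$ of prime powers each $\equiv 1\pmod 5$. For $p\in\{2,3\}$ exactly one prime $q$ contributes, with $[M_q:C_{M_q}(P)]=n_p$, and $P$ acts nontrivially only on $M_q$. Splitting off the centralized coprime part $K=\prod_{r\ne p,q}M_r$ as a direct factor, Lemma~\ref{lemma:prod} gives $\nps(G)=\nps(M_q\rtimes P)\,s(K)+\ps(M_q\rtimes P)\,\nps(K)$, which forces $K$ to be cyclic and very small and reduces the problem to a short list of admissible actions of $P$ on $M_q$. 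Working through this list—with the key cases being $M_q=C_3$ or $C_5$ inverted by $P$ (giving the coprime families), $M_3=C_3\times C_3$ with $n=1$ (giving $\Sym(3)\times C_3$), and $M_2=Q_8$ (giving $\SL(2,3)$)—and discarding the out-of-range groups such as $\Alt(4)\times C_2$ and $\SL(2,3)\times C_r$, leaves exactly the groups in 1)--3). I expect the main obstacle to be the structural reduction of the third paragraph, namely eliminating noncyclic $P$ and the $p=3$ inversion case, together with the careful $\nps$-bookkeeping in the last step that separates the in-range groups from their near-misses.
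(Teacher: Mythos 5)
Your backward direction is fine, and your overall strategy for the forward direction—force $P$ cyclic, get a normal $p$-complement $M$ via Burnside, decompose $M$ into its Sylow factors and read off $n_p$ as a product of prime-power factors each $\equiv 1\pmod p$—is genuinely different from the paper's, which never constructs a normal complement: it first proves directly that every Sylow $q$-subgroup with $q\ne p$ is normal by counting conjugates, then writes $G=PQ\times A$. Your route has a concrete failure at $p=5$. The arithmetic "$n_5=6$ is not a product of prime powers each $\equiv 1\pmod 5$" presupposes $G=M\rtimes P$ with $M=O_{5'}(G)$, hence $N_G(P)=C_G(P)$; but your third paragraph establishes this only for $p=2$ (automorphism group of a cyclic $2$-group is a $2$-group) and $p=3$ (inverting involution), and for a cyclic $5$-group the automorphism group has order $4\cdot5^{k-1}$, so $N_G(P)/C_G(P)$ may have order $2$ or $4$ and Burnside does not apply. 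Without the complement the factorization of $n_5$ is simply false in general ($\Alt(5)$ has $n_5=6$ with $N_G(P)\simeq D_{10}$). The paper instead notes that the $6+6$ conjugates of $P$ and $N_G(P)$ already give $12$ nonpower subgroups, forcing all remaining subgroups normal, and then gets a contradiction from the permutation representation $G\to\Sym(6)$.

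Two further steps are shakier than you acknowledge. For $p=3$, your exclusion of an involution $t$ inverting $P$ via "a $\Sym(4)$ section" does not work: once $\Sym(4)$ is excluded by $\nps(\Sym(4))=26$, the transitive degree-$4$ image is $\Alt(4)$, in which a Sylow $3$-subgroup is self-normalizing, so $tK=K$ and $t$ lies in the kernel $K$ of the action on $\Syl_3(G)$—no $\Sym(4)$ section is produced, and the obstruction must be removed differently (the paper avoids the issue by analysing $G/K\simeq\Alt(4)$ directly and pinning down $K$ and $Q\simeq Q_8$). Likewise, eliminating a noncyclic $P$ "by counting the conjugates of its proper subgroups" before any normal complement exists is not enough: proper subgroups of $P$ may be normal in $G$ or may fuse, and a naive count (e.g. $3+3$ conjugates of $P$ and $N_G(P)$ plus a couple of order-$2$ subgroups) need not exceed $13$; the paper has to use $Q\normal G$ together with Lemma~\ref{lemma:quo} applied to $H$, $HQ$ and $HC_Q(P)$ to triple the count. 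Finally, the concluding "work through the short list of actions" is where most of the lemma's actual content lives (identifying $\Sym(3)\times C_3$ and $\SL(2,3)$ and discarding the near-misses) and is only asserted, so as it stands the proposal is a plausible plan with real gaps rather than a proof.
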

	
	\begin{proof}
		Since $P\ne N_G(P)\ne G$, both $P$ and $N_G(P)$ have at least $p+1$ conjugates
		and so $2(p+1)\le\nps(G)\le 13$, hence $p$ is 2, 3 or 5. 
		
		First we show that $p\ne 5$. Suppose to the contrary that $p = 5$. Then 
		the 6 conjugates of $P$ and the 6 conjugates of $N_G(P)$ are nonpower subgroups,
		hence every other subgroup is normal. In particular, if $Q\in\Syl_3(G)$, then
		$Q\normal G$. The permutation action of $G$ on $\Syl_5(G)$ defines a homomorphism
		$G \to \Sym(6)$ with kernel $K$. Then $|KP/K| = 5$ and $KQ/K$ is a normal 3-subgroup 
		of $G/K$. But $\Sym(6)$ does not contain a 3-subgroup normalised by a group of 
		order 5, therefore $p \ne 5$.
		
		% -----------
		\Claim{If $Q\in\Syl_q(G)$ and $q\ne p$, then $Q\normal G$.}
		If $N_G(Q)\ne G$, then $Q = N_G(Q)$ otherwise $G$ would have least 
		$2(p+1)+2(q+1) \ge 14$ nonpower subgroups. 
		
		There are at least $p+1$ conjugates of $P$ and $N_G(P)$ and at least $q+1$
		conjugates of $Q$, therefore $2(p+1) + (q+1)\le 13$ and so $q\le 5$. If $q = 5$, 
		then $p = 2$ and $|G:Q| = 6$. But then $|P| = 2$ and $G$ would have a normal 
		subgroup $H$ of index 2, which must contain the 6 conjugates of $Q$. But
		$|H:Q| = 3$, which is a contradiction and therefore $q\ne 5$.  It follows
		that $q\le 3$, $G = PQ$ and $|P| = |G:Q|$.
		
		Suppose that $p = 2$. Then $q = 3$, $|G : Q| = 4 = |P|$ and the permutation action 
		of $G$ on $\Syl_3(G)$ defines a homomorphism $G\to\Sym(4)$ with kernel $K\subseteq Q$. 
		It follows that $G/K\simeq \Alt(4)$ and therefore $P\simeq C_2\times C_2$. But 
		$N_G(P)\ne G$, therefore $K\ne 1$ and so for each element $x\in P$ of order 2, it 
		follows from Lemma \ref{lemma:quo} that $\(x\)$ and $\(x\)K$ are nonpower subgroups. 
		Taking into account the 3 conjugates of $P$, the 3 conjugates of $N_G(P)$ and the 
		4 conjugates of $Q$, $G$ would have at least 16 nonpower subgroups, contrary to
		our assumption.
		
		This leaves the possibility that $p = 3$, $q = 4$ and hence $|G:Q| = 3 = |P|$. 
		Again the permutation action of $G$ on $\Syl_3(G)$ defines a homomorphism 
		$G\to\Sym(4)$ this time with kernel $K\subseteq N_G(P)$. But $P\nsubseteq K$, 
		therefore $K\subseteq Q$ and Lemma \ref{lemma:S4} implies $G/K\not\simeq\Sym(4)$.
		Consequently $G/K\simeq \Alt(4)$ and hence $Q\normal G$.\qed
		
		\medskip
		If $p = 2$, then $| G : N_G(P)| = 2k + 1$ for some $k$ and so $2(2k+1)\le 13$.
		Therefore $q = |G : N_G(P)|$ is either 3 or 5 and we choose $Q\in\Syl_q(G)$.
		Similarly, if $p = 3$, then $|G : N_G(P)| = 4$. In this case we let $q = 2$
		and choose $Q\in\Syl_2(G)$. In all cases $G = N_G(P)Q$.
		
		% -----------
		\Claim{$G = PQ\times A$, where $A$ is a cyclic group of order 1, $r$, $r^2$ or
			$rs$, where $r$ and $s$ are primes different from $p$ and $q$.}
		Let $R$ be a Sylow $r$-subgroups of $G$, where $r\ne p,q$ is a prime. 
		Then $R$ acts by conjugation on $\Syl_p(G)$ and in all cases $R$ fixes a conjugate 
		of $P$. Thus $R\subseteq N_G(P)$ and since $R\normal G$ we have $[P,R]\subseteq P\cap R = 1$.
		We also have $Q\normal G$ and so $[Q,R]\subseteq Q\cap R = 1$. Let $A$ 
		be the product of the Sylow $r$-subgroups for $r\ne p,q$. Then $G = PQ\times A$
		and from Lemma \ref{lemma:prod}
		\[
		\nps(G) = \nps(PQ)s(A) + \ps(PQ)\nps(A).
		\]
		
		We have $\nps(PQ)\ge 3$ and $\ps(PQ)\ge 3$. Thus $A$ is cyclic, otherwise 
		$s(A)\ge 3$ and from Theorem \ref{thm:upto4} $\nps(A)\ge 3$ hence $\nps(G) > 13$.
		Since $A$ is cyclic we have $\nps(A) = 0$ 
		and therefore $\nps(G) = \nps(PQ)s(A)\le 13$. Thus $s(A)\le 4$ and $|A|$ is 1,
		$r$ or $rs$, where $r\le s$ are primes.\qed
		
		% -----------
		\Claim{If $p = 2$, then $G$ is isomorphic to $\Sym(3)\times C_3$, $G_{n,5}\times C_r$
			where $r\ne 2,5$ is a prime or $G_{n,3}\times C_{rs}$ where $r$ and $s$ are primes 
			such that $3 < r < s$.}
		We have $G = PQ\times A$ where $A$ is cyclic and $\nps(G) = \nps(PQ)s(A)$.
		If $A\ne 1$, then $s(A)\ge 2$ and therefore $\nps(G)\le 6$. It follows from Theorems 
		\ref{thm:upto4} and \ref{thm:upto9} that for some $n$, $G\simeq G_{n,3}$ or $G_{n,5}$.
		If $s(A) = 2$, then $A\simeq C_r$; if $s(A) = 3$, then $A\simeq C_{r^2}$; if
		$s(A) = 4$, then $A\simeq C_{rs}$, for some primes $r$ and $s$.
		
		\smallskip
		This leaves the possibility that $A = 1$ and $G = PQ$. If $|G:N_G(P)| = 5$, 
		then $P$ and $N_G(P)$ each have 5 conjugates and if $Q$ is not cyclic it follows
		from Theorem \ref{thm:power} that $Q$ contains at least 5 nonpower subgroups of
		order 5.  This contradicts the assumption that $\nps(G)\le 13$ and therefore
		$|G:N_G(P)| = 3$.
		
		We have $[N_Q(P),P] \subseteq P\cap Q = 1$ and therefore $N_Q(P) = C_Q(P)$. 
		Consequently $N_G(P) = P\times C_Q(P)$. Let $K$ be the kernel of the 
		permutation action on $\Syl_2(G)$. Then $G/K\simeq\Sym(3)$ and $K\subseteq N_G(P)$, 
		hence $K\cap Q = C_Q(P)$ and $|Q:C_Q(P))| = 3$.
		
		If $P$ is not cyclic, then $\nps(P)\ge 3$ and for each nonpower subgroup $H$ of
		$P$ it follows from Lemma \ref{lemma:quo} that $H$, $HQ$ and $HC_Q(P)$ are nonpower 
		subgroups of $G$ hence $\nps(G)\ge 15$, which contradicts the assumption 
		$\nps(G)\le 13$. Therefore $P$ is cyclic and so $N_G(P) = C_G(P)$.
		
		Let $P = \(a\)$. Then $P\cap K = \(a^2\)$ and $K = \(a^2\)\times C_Q(P)$. Thus 
		$\(a^2\)$ is a characteristic subgroup of $K$ and therefore $\(a^2\)\normal G$. It 
		follows that $z = a^2\in Z(G)$. Let $b$ be a conjugate of $a$ such that $b\ne a$
		and let $x = ab^{-1}$. Then $b^2 = z$, $b^{-1} = bz^{-2}$ and $a^{-1}xa = x^{-1}$. 
		Since $x\in \(z\)Q$ and $P$ is cyclic, $y = x^{2^k}\ne 1$ belongs to $Q$ for some~$k$.
		Furthermore $a$ has only 3 conjugates therefore $y^3 = 1$. The group $\(y\)P$ 
		is generated by the conjugates of $P$ and so $\(y\)P\normal G$. Moreover, for some 
		$n$ we have $\(y\)P\simeq G_{n,3}$.
		
		We now have $G = \(y\)P\times C_Q(P)$. If $C_Q(P)$ is not cyclic, it contains
		an elementary abelian subgroup $E$ of order 9. Then $\(y\)\times H$ is 
		elementary abelian of order 27; it contains 13 subgroups of order 3, at most one 
		of which can be a nonpower subgroup (Theorem \ref{thm:power}). It follows
		that $C_Q(P)$ is cyclic. If the exponent of $C_Q(P)$ is $3^e$, then $Q^3$ is
		a power subgroup of order $3^{e-1}$ and from Corollary \ref{cor:power},
		$Q$ contains at least $2e + 2$ nonpower subgroups of $G$.  If $e\ge 2$, then
		$P$, $P\times C_Q(P)$ and $P\times Q^3$ each have 3 conjugates and thus 
		$\nps(G)\ge 15$, which contradicts our assumption.  Therefore $G\simeq
		G_{n,3}\times C_3$ and from Lemma \ref{lemma:Gn3C3} we have $n = 1$;
		that is, $G\simeq\Sym(3)\times C_3$.
		
		% -----------
		\Claim{If $p = 3$, then $\nps(G) = 11$ and $G\simeq\SL(2,3)$.}
		In this case $|G : N_G(P)| = 4$ and the permutation action on $\Syl_3(G)$ defines 
		a homomorphism $G\to\Sym(4)$ whose kernel $K$ is a proper subgroup of $N_G(P)$. 
		Since $Q\normal G$ we must have $G/K\simeq \Alt(4)$ and $QK/K\simeq C_2\times C_2$.
		The assumption $10\le\nps(G)$ implies $K \ne 1$. Let $x_1, x_2, x_3\in Q$ 
		be elements such that $x_iK$ is an involution in $QK/K$. From Lemma \ref{lemma:quo},
		$\(x_i\)$ and $\(x_i\)K$ are nonpower subgroups. The 4 conjugates of $P$, the 4 
		conjugates of $N_G(P)$ are nonpower subgroups and since there are at most 13 
		nonpower subgroups in $G$, it follows that $\(x_i\) = \(x_i\)K$. That is, 
		$K\subseteq \(x_i\)$ and so $K = \(x_i^2\)$ for $1\le i\le 3$. We 
		now see that $|P| = 3$, $K\subseteq C_G(P)$ and $P$ permutes the $x_i$, hence
		$x_1^2 = x_2^2 = x_3^2$. It follows that $Q\simeq Q_8$
		and thus $G\simeq \SL(2,3)$.
	\end{proof}
	
	\begin{lemma}\label{lemma:B}
		Let $P$ be a Sylow $p$-subgroup of $G$ such that  $P = N_G(P)\ne G$. 
		Then $10\le\nps(G)\le 13$ if and only if for some $n\ge 1$
		one of the following holds
		\begin{enumerate}[\quad\rm 1)]
			\item $\nps(G) = 10$, $p = 2$ and $G$ is isomorphic to $G^{(2)}_{n,5}$ for $n\ge 2$.
			\item $\nps(G) = 10$, $p = 3$ and $G$ is isomorphic to $A_2 = (C_2\times C_2)\rtimes C_9$.
			\item $\nps(G) = 11$, $p = 2$ and $G$ is isomorphic to $G_{n,11}$.
			\item $\nps(G) = 11$, $p = 5$ and $G$ is isomorphic to $G^{(3)}_{5,n;\,11,1}$.
			\item $\nps(G) = 12$, $p = 2$ and $G$ is isomorphic to $G_{n,9}$.
			\item $\nps(G) = 13$, $p = 2$ and $G$ is isomorphic to $G_{n,13}$,
			$\Sym(3)\times C_2 = D_{12}$ or $C_3\rtimes Q_8$.
			\item[$7)$] $\nps(G) = 13$, $p = 3$ and $G$ is isomorphic to $F_{n,13}$ or
			$A_3 = (C_2\times C_2)\rtimes C_{27}$.
		\end{enumerate}
	\end{lemma}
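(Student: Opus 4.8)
The plan is to exploit the hypothesis $P=N_G(P)\ne G$ arithmetically. Since $P$ is self-normalizing and not normal, the $n_p=|G:P|=|G|_{p'}$ Sylow conjugates of $P$ are distinct non-normal, hence nonpower, subgroups, so $p+1\le n_p\le\nps(G)\le 13$ with $n_p\equiv 1\pmod p$. This forces $p\in\{2,3,5,7,11\}$ and pins $n_p$ to a short list in each case ($n_2\in\{3,5,7,9,11,13\}$, $n_3\in\{4,7,10,13\}$, $n_5\in\{6,11\}$, $n_7=8$, $n_{11}=12$). As $n_p=|G|_{p'}$ is prescribed, the primes dividing $|G|$ are exactly those dividing $n_p$.

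Next I would reduce to a semidirect product. For each prime $q\ne p$ dividing $|G|$, a non-normal Sylow $q$-subgroup contributes at least $q+1$ further conjugates, and the congruence and divisibility conditions on the number of such subgroups together with $\nps(G)\le 13$ show that the Hall $p'$-subgroup $K$ is normal; the borderline subcases ($n_2\in\{3,9\}$, $n_3=4$) instead require the permutation representation of $G$ on its Sylow $p$-subgroups, and the two cases with coprime $p'$-primes ($n_3=10$, $n_5=6$) a normal $p'$-complement argument. Once $K\normal G$, Schur--Zassenhaus gives $G=K\rtimes P$, and $N_G(P)=P$ is equivalent to $C_K(P)=1$. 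This already disposes of $p=11$ (no $11$-group acts on the resulting $K$ of order $12$ with $C_K(P)=1$, as $11\nmid|\mathrm{Aut}(K)|$) and of $p=7$ (the only order-$8$ group with a fixed-point-free automorphism of order $7$ is $C_2\times C_2\times C_2$, and $\nps(C_2^3\rtimes C_7)=22$). The engine for the remaining primes is the inequality
\[
\nps(G)\ \ge\ n_p+2\,\nps(P),
\]
which follows from Lemma \ref{lemma:quo} and the remark after it applied to the normal Hall subgroup $K$: for each nonpower $H\le P$ both $H$ and $HK$ are nonpower in $G$. With $\nps(G)\le 13$ this caps $\nps(P)$, so by Theorems \ref{thm:upto4} and \ref{thm:upto9} the $p$-group $P$ lies in a short explicit list; whenever $K$ or $P$ is noncyclic the bound, reinforced by Theorem \ref{thm:power} and Corollary \ref{cor:power}, rules the configuration out, while the surviving metacyclic groups $K\rtimes P$ with $P$ cyclic are the families $G^{(r)}_{p,n;q,m}$ evaluated by Lemma \ref{lemma:Gnp}.

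Carrying this out prime by prime gives the list. For $p=5$ the value $n_5=6$ is impossible (the normal $2$-complement has normal Sylow $5$, whence $N_G(P)\supsetneq P$), leaving $n_5=11$ with $K=C_{11}$ and cyclic $P=C_{5^n}$ acting with order $5$: this is $G^{(3)}_{5,n;11,1}$ with $\nps=11$. For $p=3$ the values $n_3=7,10$ are excluded ($n_3=10$ because $P$ would then centralize the normal $C_5$; $n_3=7$ because cyclic $P$ gives $\nps(F_{n,7})=7<10$ and noncyclic $P$ gives $\nps\ge 7+2\cdot 4=15$), $n_3=4$ yields $G/\text{(kernel)}\cong\Alt(4)$ and, after showing the Sylow $2$-subgroup is a normal $C_2\times C_2$ and $P$ is cyclic (noncyclic $P$ already forces $\ge 16$ nonpower subgroups, as in $\Alt(4)\times C_3$), $G\cong A_a$ with $\nps(A_a)=3a+4\in\{10,13\}$; and $n_3=13$ gives $F_{n,13}$. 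For $p=2$ one treats each $n_2$: $n_2=7$ is eliminated; $n_2=11,13$ force cyclic inverting $P$, giving $G_{n,11}$ and $G_{n,13}$; $n_2=9$ forces $K=C_9$ (the alternative $C_3\times C_3$ producing far too many order-$3$ subgroups), giving $G_{n,9}$; $n_2=5$ forces the order-$4$ action with cyclic $P$, giving $G^{(2)}_{2,n;5,1}$; and $n_2=3$ reduces to $G=C_3\rtimes P$ with $P\in\{C_{2^n},C_2\times C_2,Q_8,C_2\times C_4\}$, of which only $C_2\times C_2$ and $Q_8$ survive, giving $D_{12}$ and $C_3\rtimes Q_8$ by Lemmas \ref{lemma:dc2} and \ref{lemma:S4}.

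The hardest part is the $p=2$ analysis, specifically separating the noncyclic Sylow $2$-subgroups, because the bound $\nps(G)\ge n_2+2\nps(P)$ only caps $\nps(P)$ and leaves borderline candidates that must be settled by exact computation of $\nps(K\rtimes P)$. The delicate point is that the outcome depends sharply on $|K|$: for $P=Q_8$ the group $C_3\rtimes Q_8$ has $\nps=13$ and is admissible, whereas $C_5\rtimes Q_8$ and $C_7\rtimes Q_8$ fail because the larger normal cyclic subgroup produces too many non-normal conjugate $C_4$-subgroups. Likewise $P=C_2\times C_4$, giving $\Sym(3)\times C_4$ and $G_{2,3}\times C_2$ (both with $\nps>13$), requires separate counts. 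These direct subgroup enumerations, supported by Lemmas \ref{lemma:dc2} and \ref{lemma:S4} and by Theorem \ref{thm:power}, are where the real labour of the proof lies.
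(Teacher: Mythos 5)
Your overall strategy is the paper's: bound $n_p=|G:P|$ by $n_p\equiv 1\pmod p$ and $p+1\le n_p\le 13$, show the Hall $p'$-subgroup $Q$ is normal (the paper does this via its Lemma \ref{lemma:A} plus Burnside's normal $p$-complement theorem), and then run the doubling bound $\nps(G)\ge n_p+2\nps(G/Q)$ through Theorems \ref{thm:upto4} and \ref{thm:upto9} to pin down $P$ case by case. Most of your subcases match the paper's, and your handling of $C_5\rtimes Q_8$, $C_7\rtimes Q_8$ and the two groups $C_3\rtimes(C_2\times C_4)$ by direct conjugate counts is a legitimate (if more laborious) substitute for the paper's quotient arguments in Cases 1a--1c.

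There is, however, one concrete gap: the elimination of $Q\simeq C_3\times C_3$ when $p=2$ and $n_2=9$. You dismiss it because it "produces far too many order-$3$ subgroups", but it does not: $C_3\times C_3$ has exactly four subgroups of order $3$, and in this configuration none of them is normal, so together with the $9$ conjugates of $P$ you reach only $9+4=13$ nonpower subgroups --- exactly the allowed maximum, not a contradiction. The paper needs a genuinely structural argument here: since those $13$ subgroups exhaust the nonpower budget, every proper subgroup of $P$ must be normal in $G$, which forces $P=\(a\)$ cyclic with $\(a^2\)\normal G$ and hence $a^2\in C_P(Q)$; then $a$ induces an automorphism of order at most $2$ on $C_3\times C_3$, and any such automorphism fixes some subgroup of order $3$ setwise (either a fixed point $x$ gives $\(x\)\normal G$, or $x^a=y\ne x^{\pm1}$ gives the fixed subgroup $\(xy\)$, or $a$ inverts everything and all four are normal), contradicting the assumption that no order-$3$ subgroup is normal. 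Without this step your case $n_2=9$ is incomplete. A smaller quibble of the same kind: $\nps(\Alt(4)\times C_3)=7\cdot2+3\cdot0=14$ by Lemma \ref{lemma:prod}, not $\ge16$; the exclusion still goes through, but the number you quote is not the one the computation gives.
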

	
	\begin{proof}
		The Sylow subgroup $P$ has $m = |G:N_G(P)|$ conjugates. Since $m\equiv 1\pmod p$
		and $N_G(P)\ne G$, we have $m \ge p+1$ and the conjugates of $P$ are nonpower subgroups. 
		The assumption $\nps(G)\le 13$ implies $p\in\{2,3,5,7,11\}$. 
		
		If $p = 2$, then $m\in\{3,5,7,9,11,13\}$; if $p = 3$, then $m\in\{4,7,10,13\}$; if 
		$p = 5$, then $m\in \{6,11\}$; if $p = 7$, then $m = 8$; if $p = 11$, then $m = 12$. 
		
		Suppose that $p = 3$ and $m = 10$ or $p = 5$ and $m = 6$. Then $G$ is a group of 
		twice odd order and therefore has a subgroup $H$ of index 2, which contains $P$. 
		Then $|H:P| = m/2$, which is impossible. Suppose that $p = 11$ and $m = 12$. The 
		permutation action on $\Syl_{11}(G)$ defines a homomorphism $G\to\Sym(12)$ whose 
		kernel $K = \bigcap_{g\in G}P^g$ is properly contained in $P$. Let $\overline G = G/K$.
		Then $N_{\overline G}(P/K) = C_{\overline G}(P/K)$ and therefore by Burnside's normal 
		$p$-complement theorem (see \cite[Th.\ 7.4.3]{gorenstein:1968}) $P/K$ has a normal 
		11-complement in $\overline G$, which is a contradiction.
		
		Thus in all cases $m$ is a power of a prime $q$ and for $Q\in\Syl_q(G)$ we have 
		$G = PQ$ and $|Q| = m$.  We shall show that $Q\normal G$.
		
		If $N_G(Q)\ne G$, it follows from Lemma \ref{lemma:A} that $Q = N_G(Q)$, otherwise
		$P\normal G$. If the order of $Q$ is $q$ or $q^2$, then $Q$ is abelian and it follows 
		from Burnside's normal $p$-complement theorem that $P\normal G$. Thus the only
		possibility is $p = 7$, $m = 8$ and $q = 2$.  But then $|G : Q|$ must be a power
		of 7 and so $\nps(G)\ge 15$, contrary to our assumption.  This proves that
		$Q\normal G$.
		
		The permutation action on $\Syl_p(G)$ defines a homomorphism $G\to\Sym(m)$ whose 
		kernel $K = \bigcap_{g\in G}P^g$ is properly contained in $P$. Since $Q\normal G$
		we have $[K,Q]\subseteq K\cap Q = 1$. Moreover $Q$ acts transitively on the $m$ 
		conjugates of $P$ and therefore $K = C_P(Q)$.
		
		\Case{1}{$p = 2$ and $m\in\{3,5,7,9,11,13\}$.}
		We have $G = Q\rtimes P$, where $P$ is a 2-group and $Q$ is an abelian
		group of order $m$. We treat each value of $m$ separately.
		
		\Case{1a}{$p = 2$ and $|Q| = 3$.}
		{\narrow
			In this case $G/K\simeq\Sym(3)$ and $|P/K| = 2$. Therefore 
			$\Phi(P)\subseteq K = C_P(Q)$ and so $\Phi(P)\normal G$. Thus 
			\[
			G/\Phi(P)\simeq\Sym(3)\times\underbrace{C_2\times\dots\times C_2}_n.
			\]
			From Lemma \ref{lemma:dc2} $n \le 1$. If $n = 0$, then $P$ is cyclic,
			$G\simeq G_{n,3}$ and $\nps(G) = 3$, which contradicts the assumption 
			$10\le \nps(G)$.  Thus $n = 1$, $G/\Phi(P)\simeq\Sym(3)\times C_2\simeq D_{12}$
			and from Lemma \ref{lemma:dc2} $\nps(G/\Phi(P)) = 13$. The only non-trivial 
			proper power subgroup of $\Sym(3)\times C_2$ is the subgroup of order 3. Therefore, 
			for all $H\subseteq P$ such that $|H\Phi(P)/\Phi(P)| = 2$ it follows from 
			Lemma \ref{lemma:quo} that both $H$ and $H\Phi(P)$ are nonpower subgroups.
			Therefore $H = H\Phi(P)$ and hence $\Phi(P)\subseteq H$. 
			
			If $P\setminus \Phi(P)$ contains and element of order 2, then $\Phi(P) = 1$
			and thus $G\simeq\Sym(3)\times C_2$. Otherwise $\Phi(P) = \(x^2\)$ for all 
			$x\in P\setminus\Phi(P$.  Thus $P$ is nonabelian and it follows from Lemma 
			\ref{lemma:quo} that $\nps(G)\ge 2\nps(G/Q)$.  Then Theorems \ref{thm:upto4} 
			and \ref{thm:upto9} show that the only possibility for $P\simeq G/Q$ is the 
			quaternion group $Q_8$.
			Thus $G$ is the semidirect product
			\[
			C_3\rtimes Q_8 = \( x,y,b\mid x^4 = y^4 = b^3 = [y,b] = 1,\,
			x^2 = y^2,\,[x,y] = x^2,\,b^x = b^{-1}\).
			\]}
		
		\Case{1b}{$p = 2$ and $|Q| = 5$.}
		{\narrow
			In this case $QK/K$ is a normal subgroup of $G/K\subseteq \Sym(5)$. Therefore 
			$|G/K|$ is either 10 or 20. 
			
			Suppose $|G/K| = 10$. If $P$ is not cyclic, then $\Phi(P)\subseteq K$ and therefore 
			$\Phi(P)\normal G$. We have $G/\Phi(P)\simeq D_{10}\times C_2\times\dots\times C_2$ 
			and Lemmas \ref{lemma:quo} and \ref{lemma:dc2} imply $\nps(G)\ge 19$ whereas we 
			assume that $\nps(G)\le 13$, hence $P$ is cyclic. If $a$ generates $P$ and $b$ 
			generates $Q$, then $b^a = b^{-1}$ and $G\simeq G_{n,5}$. But then $\nps(G) = 5$
			and so $G$ doesn't satisfy the assumption $10\le\nps(G)$.
			
			Thus $|G/K| = 20$, $G/K\simeq G^{(2)}_{2,n;\,5,1}$ and $\nps(G/K) = 10$. 
			Choose $a\in P$ such that $aK$ generates $P/K$. The five conjugates of $\(a\)$ and 
			the five conjugates of $\(a^2\)$ are nonpower subgroups of $G$. Similarly, the five 
			conjugates of $\(a\)K$ and the five conjugates of $\(a^2\)K$ are nonpower subgroups 
			of $G$. Since $\nps(G)\le 13$ we must have $K\subseteq \(a\)$ and therefore $P$ is 
			cyclic. Therefore $G\simeq G^{(2)}_{2,n;\,5,1}$.\par}
		
		\Case{1c}{$p = 2$ and $|Q| = 7$.}
		{\narrow
			In this case $G/K\simeq D_{14}$ and $P$ has 7 conjugates. Suppose that
			$a\in P\setminus K$. Then $P$, $\(a\)$ and their conjugates are nonpower subgroups.
			But $\nps(G)\le 13$ and so $P = \(a\)$. Therefore $G\simeq G_{n,7}$ whence
			$\nps(G) = 7$ and there are no examples with $10\le\nps(G)$.\par}
		
		\Case{1d}{$p = 2$ and $|Q| = 9$.}
		{\narrow
			Suppose that there is a subgroup $R$ of order 3 in $Q$ such that $R\normal G$.
			Then $RP$ is not normal in $G$, otherwise the Frattini argument 
			\cite[Th.\ 1.3.7]{gorenstein:1968} implies $G = RN_G(P) = RP$, which is a 
			contradiction. The 9 conjugates of $P$ and the 3 conjugates of $RP$ are nonpower 
			subgroups. Therefore all other subgroups are normal. Thus $R$ is the unique 
			subgroup of $Q$ of order 3 and hence $Q$ is cyclic. If $a\in P\setminus K$ and 
			$b$ generates $Q$, then $b^a = b^{-1}$. If $P\ne \(a\)$, then $\(a\)\normal G$ 
			and so $ab = ba$, which is a contradiction.  Therefore $P$ is cyclic, 
			$\nps(G) = 12$ and $G\simeq G_{n,9}$.
			
			This leaves us with the possibility that no subgroup of order 3 is normal in $G$.
			This implies $Q$ is not cyclic, otherwise the subgroup of order three in $Q$ would 
			be a normal subgroup of $G$. Consequently
			$Q\simeq C_3\times C_3$ and its four subgroups of order 3 are not normal in $G$.
			Since $\nps(G)\le 13$ and the 9 conjugates of $P$ are nonpower subgroups, it
			follows that the proper subgroups of $P$ are normal in $G$. As above, this implies
			that $P$ is cyclic.  Furthermore, if $P = \(a\)$, then $\(a^2\)\normal G$ and so
			$a^2\in K$. Thus if $x$ is an element of order 3 in $Q$ and $y = x^a\ne x$, then
			$(xy)^a = xy$ and $\(xy\)\normal G$, which is a contradiction.\par}
		
		\Case{1e}{$p = 2$ and $|Q| = 11$.}
		{\narrow
			In this case $G/K\simeq D_{22}$ and $P$ has 11 conjugates.
			Thus $P$ is cyclic and $G\simeq G_{n,11}$.\par}
		
		\Case{1f}{$p = 2$ and $|Q| = 13$.}
		{\narrow
			In this case $G/K\simeq D_{26}$ and $P$ has 13 conjugates.
			Thus $P$ is cyclic and $G\simeq G_{n,13}$.\par}
		
		\Case{2}{$p = 3$ and $m\in\{4,7,13\}$.}
		In this case $G = Q\rtimes P$, where $P$ is a 3-group and $Q$ is a $q$-group with
		$q\in \{2,7,13\}$.
		\Case{2a}{$p = 3$ and $|Q| = 4$.}
		{\narrow
			Considering the homomorphism $G\to \Sym(4)$ (with kernel $K\subset P$), we have 
			$G/K \simeq \Alt(4)$. Thus $Q\simeq C_{2}\times C_{2}$. From $\nps(\Alt(4)) = 7$ 
			and $10\le\nps(G)$ it follows that $K\ne 1$. Then $G$ has 7 nonpower subgroups 
			that properly contain $K$ and 3 nonpower subgroups that are non-trivial proper 
			subgroups of $Q$. By $\nps(G)\leq 13$, we have $\nps(G/Q)\leq 3$. This implies 
			$P\simeq C_{3^n}$ and hence $G\simeq A_{n}$. From Lemma \ref{lemma:list} we have 
			$G\simeq A_2$ or $A_3$.\par}
		
		\Case{2b}{$p = 3$ and $|Q| = 7$.}
		{\narrow
			From Theorems \ref{thm:upto4} and \ref{thm:upto9}, Lemma \ref{lemma:quo} and the
			assumption $\nps(G)\le 13$, $P$ is either cyclic or isomorphic to $C_3\times C_3$.
			If $P\simeq C_{3} \times C_{3}$ and $H$ is a non-central subgroup of order 3, then 
			$H$ has 7 conjugates. This contradicts $\nps(G)\le 13$ because $P$ has three 
			subgroups of order 3 not in the centre of $G$. Thus $P$ is cyclic. Let $a$ be a 
			generator of $P$. Then there exists a generator $b$ of $Q$ such that $a^{-1}ba = b^2$. 
			Therefore, for some $n\ge 1$, $G\simeq F_{n,7}$. But then $\nps(G) = 7$ whereas we
			assume that $10\le \nps(G)$.\par}
		
		\Case{2c}{$p = 3$ and $|Q| = 13$.}
		{\narrow
			As in previous cases $P$ is cyclic and thus $G\simeq F_{n,13}$.\par}
		
		\Case{3}{$p = 5$ and $|Q| = 11$.}
		As in previous cases $P$ is cyclic and thus $G\simeq G_{5,n;\,11,1}^{(3)}$.
		
		\Case{4}{$p = 7$ and $|Q| = 8$.}
		The subgroup $P$ acts transitively on the 7 subgroups of order 2 of $Q$ and on 
		the 7 subgroups of order 4 otherwise $Q\subseteq C_G(P)$. Thus $\nps(G)\ge 14$ 
		and so there are no examples with $p = 7$.
	\end{proof}
	
	\begin{proof}[Proof of Theorem \ref{thm:main}]
		From above discussion we may suppose that $G$ is nilpotent so that $G = P\times H$ 
		where $P$ is a non-cyclic $p$-subgroup and $H\ne 1$ is a group whose order is not 
		divisible by $p$. If $H$ is also non-cyclic, then $H$ has at least 3 subgroups and
		from Lemma \ref{lemma:prod} we have $\nps(G)\ge 15$. Thus $H$ is cyclic and 
		$\nps(P\times H) = \nps(P)s(H)$. 
		
		If $H\ne 1$, then $s(H)\ge 2$ and therefore the assumption $\nps(G)\le13$ implies 
		$\nps(P)\le 6$. Referring to Theorems \ref{thm:upto4} and \ref{thm:upto9}, $P$ is 
		isomorphic to $C_2\times C_2$, $C_3\times C_3$, $C_2\times C_4$ or $C_5\times C_5$.
		If $\nps(G) = 10$, then $G\simeq C_2\times C_4\times C_p$, where $p > 3$ is a prime;
		if $\nps(G) = 12$, then $G\simeq Q_8\times C_{qr}$, $C_2\times C_2\times C_{qr}$,
		$C_3\times C_3\times C_{s^2}$ or $C_5\times C_5\times C_p$, where $p,q,r$ and $s$ 
		are primes such that $p\ne 5$, $3 < q < r$ and $s\ne 3$.
		
		From now on we may suppose that $G$ is a non-cyclic $p$-group. Then $G/\Phi(G)$ is an 
		elementary abelian group of order $p^d$ and every proper non-trivial subgroup of
		$G/\Phi(G)$ is a nonpower subgroup. Since $\nps(C_p\times C_p\times C_p)\ge 
		\nps(C_2\times C_2\times C_2) = 14$ it follows that $d = 2$ and $G$ is generated by 
		two elements. Thus from Lemma \ref{lemma:list}\,\eqref{i:Cp} we have 
		$G/G'\simeq C_2\times C_{2^n}$ for $1\le n\le 6$, 
		$C_3\times C_{3^n}$ for $1\le n\le 4$, $C_5\times C_{5^n}$ for $n\in\{1,2\}$, 
		$C_7\times C_7$, $C_{11}\times C_{11}$ or $C_4\times C_4$. If $G$ is abelian, the 
		proof is complete. From now on we assume that $G'\ne 1$.
		
		If $G/G'= C_2\times C_2$ it follows from \cite[Th.\ 5.4.5]{gorenstein:1968} that
		$G$ is is isomorphic to $D_{2^m}$, $S_{2^m}$ or $Q_{2^m}$ and then from 
		Lemma \ref{lemma:list} the only possibility is $S_{16}$.
		
		Since $G'\ne 1$ there exists $R\normal G$ such that $|G'/R| = p$. We shall determine
		the structure of $G/R$ for for each choice of $G/G'$.
		
		If $G/G' = C_p\times C_p$ and $p$ is odd, it follows from 
		\cite[Th.\ 5.5.1]{gorenstein:1968} that $G/R$ is an extraspecial group of order
		$p^3$ and then from Lemma \ref{lemma:list} the only possibility for $G/R$ is 
		$M_{3,5}$. We argue as in \cite{zheng-etal:2023}. The group $M_{3,5}$ has a cyclic 
		subgroup of order 25 and therefore it is metacyclic. It follows from Lemma \ref{lemma:R} 
		that $G$ is metacyclic and so $G$ has a cyclic normal subgroup that properly 
		contains $G'$; that is, $G$ has a cyclic subgroup of index 5 and therefore, by 
		\cite[Th.\ 5.4.4]{gorenstein:1968} we have $G\simeq M_{n,5}$.  From
		Lemma \ref{lemma:list} it follows that $n = 3$ and $G\simeq M_{3,5}$.
		
		If $G/G'\simeq C_2\times C_{2^6}$, $C_3\times C_{3^4}$, $C_{11}\times C_{11}$ or 
		$C_4\times C_4$, then $\nps(G/G')\in\{12,13\}$ and  all subgroups of $G$ 
		that do not contain $G'$ are normal. But then every subgroup of $G$ is normal because
		all subgroups that contain $G'$ are normal. Thus $G$ is a Hamiltonian group whence $p = 2$ 
		and $G\simeq Q_8\times C_2\times\dots\times C_2$ (see \cite[III, 7.12]{huppert:1967}).
		It follows from Lemmas \ref{lemma:list} and \ref{lemma:hamilton} that no groups of  this
		type satisfy $10\le\nps(G)\le 13$. Similarly, because $\nps(C_5\times C_{5^2}) = 11$, 
		the same argument shows that there are no examples with $G/G' \simeq C_5\times C_{5^2}$.
		
		Finally we consider the cases $G/G'\simeq C_2\times C_{2^n}$ for $2\le n\le 5$
		and $C_3\times C_{3^{n}}$ for $n = 2,3$. If $G/R$ has a cyclic maximal subgroup, 
		it follows from \cite[Th.\ 5.4.4]{gorenstein:1968} that $G\simeq M_{n+2,2}$ or
		$M_{n+2,3}$. From Lemmas \ref{lemma:qd} and \ref{lemma:list}, $G$ is isomorphic to
		$M_{6,2}$, $M_{7,2}$, $M_{4,3}$ or $M_{5,3}$. If the exponent of $G/R$ is $2^n$ or 
		$3^n$, then from Lemmas \ref{lemma:R} and \ref{lemma:B}, $G/R\simeq B^2_{2,2}$
		and $\nps(G/R) = 12$.
		
		We may choose generators $a$ and $b$ for $G$ such that their images $\overline a$ 
		and $\overline b$ in $G/R$ satisfy the relations $\overline a^4 = 1$, 
		$\overline b^4 = 1$ and ${\overline a}^{-1}\overline b\overline a = {\overline b}^{-1}$.
		The nonpower subgroup $\(\overline a\)$ is the image in $G/R$ of both $\(a\)$ and 
		$\(a\)R$. Since $\(a\)$ is not normal in $G$ we must have $R\subseteq \(a\)$
		and therefore $R = \(a^4\)$ is cyclic. The group $G$ has at most one power subgroup
		of order 2 and since $G$ is neither cyclic nor a generalised quaternion group it has 
		at least three subgroups of order 2.  Therefore there are elements $x,y\in G\setminus R$
		of order 2 such that $\(x\)$ and $\(y\)$ are nonpower subgroups.  But then
		$\(x\)R$ and $\(y\)R$ are also nonpower subgroups and this is a contradiction.
		Therefore, $R = 1$, $G\simeq B^2_{2,2}$ and this completes the proof.
	\end{proof}
	
	\begin{rem}
		All groups that occur in Theorems \ref{thm:upto4}, \ref{thm:upto9} and 
		\ref{thm:main} have the property that for at most one prime $p$ the group 
		has more than one Sylow $p$-subgroup.  The smallest example of a group that
		does not have this property is the semidirect product $C_7\rtimes C_6$ (with
		trivial centre); it has 21 nonpower subgroups.
	\end{rem}

  \noindent\textbf{Acknowlegments.} The authors are grateful to the referee for her/his valuable
  comments and for the careful reading of this paper.
	
	% ------------------------

\end{document}